\begin{document}

\title*{A Study of Directional Entropy Arising from \(\mathbb{Z} \times \mathbb{Z}_+\) Semigroup Actions}
\author{Hasan Ak\i n}
\institute{Hasan Ak\i n \at Department of Mathematics, Faculty of
Arts and Sciences, Harran University, Sanliurfa, TR63050, Turkey
\email{akinhasan@harran.edu.tr; akinhasan25@gmail.com}}
%
%
\maketitle  

\abstract*{Each chapter should be preceded by an abstract (no more
than 200 words) that summarizes the content. The abstract will
appear \textit{online} at \url{www.SpringerLink.com} and be
available with unrestricted access. This allows unregistered users
to read the abstract as a teaser for the complete chapter. Please
use the 'starred' version of the \texttt{abstract} command for
typesetting the text of the online abstracts (cf. source file of
this chapter template \texttt{abstract}) and include them with the
source files of your manuscript. Use the plain \texttt{abstract}
command if the abstract is also to appear in the printed version
of the book.}

\abstract{This chapter investigates both topological and
measure-theoretic directional entropies arising from semigroup
actions generated by one-dimensional linear cellular automata
(LCAs) and the shift transformation on the compact metric space
\(\mathbb{Z}_m^{\mathbb{Z}}\). It aims to present a collection of
formulas for computing these entropy measures. The work
consolidates and builds upon previous research in the field. The
study of toplological directional entropy (TDE) is carried out using Milnor’s framework, while the
measure-theoretic counterpart is analyzed through the
Kolmogorov–Sinai formalism.}

\section{Short introduction}
Both measure-theoretic entropy and topological entropy play fundamental roles
in dynamical systems theory and ergodic theory \cite{KKW-2024,Walters1982}.
Originally introduced by Shannon, entropy has become a crucial
research topic across multiple scientific disciplines. The
measure-theoretic approach to entropy, primarily developed by the
Russian school, including Kolmogorov and Sinai, has led to various
terminologies and methodologies for computing entropy in dynamical
systems \cite{Kolmogorov-1959, Sinai-1959}. In contrast, the
concept of topological entropy, first introduced by Adler et al.
\cite{Adler-1965}, has been progressively extended to continuous
functions on diverse spaces \cite{Walters1982}.

In 1986, Milnor introduced the concept of directional entropy
\cite{Milnor1986,Milnor1988}. Park made significant contributions to this
concept \cite{Park-1995,Park-1999}. Later, Kaminski and Park
extended the notion by defining directional entropy for
\(\mathbb{Z}^{2}\) actions on a Lebesgue space
\cite{Kaminski1999}. Courbage and Kaminski further explored the
metric directional entropy of \(\mathbb{Z}^{2}\)-actions,
particularly those arising from the natural extension of a
cellular automaton (CA). This extension operates alongside the
shift transformation on the space of doubly infinite sequences
over a finite state space \cite{Courbage2002}.

Recently, the topological entropy dimension of
$\mathbb{Z}^2$-topological dynamical systems has been studied. The
relationships between TDE and directional topological entropy
dimension were investigated \cite{liu2022directional}.  Wei et al.
\cite{WXZ-CMP-2024} investigate directional entropy along
non-rational directions within the two-dimensional space
$\mathbb{R}^2$, and they elucidate the configuration of
directional Pinsker $\sigma$-algebra within dynamical systems
preserving measures in $\mathbb{Z}^2$. In Ref.
\cite{BCR-IJM-2016}, Broderick, Cyr and Kra examine the
directional entropy of a dynamic system linked to a $\mathbb{Z}^2$
configuration within a finite set of symbols. In a recent study by
the author, detailed analyses of both measure-theoretic and
topological entropy for 1D CAs on the
ring \(\mathbb{Z}_m\) were presented \cite{Akin-2024}. In the
current work, we focus on the concept of directional entropy for
the \(\mathbb{Z}^2\)-action generated by 2D CAs. Our findings reveal that entropy values exhibit
directional variations. For further insights, we suggest
consulting the book chapter in \cite{Akin-2024}.

In this chapter, we examine the directional measure-theoretic and topological entropies associated with the $\mathbb{Z}^{2}$-action, formed by the interplay between a 1D CA and a shift transformation. Building upon established research, we offer a comprehensive analysis of the topic, enriched with numerous examples and illustrative figures to facilitate comprehension.

This chapter specifically investigates the directional entropy properties of $\mathbb{Z} \times \mathbb{Z}_+$-semigroup actions generated by a 1D CA $F$ and the shift map $\sigma$ on the full shift space $\mathbb{Z}_m^{\mathbb{Z}}$. The study employs two complementary frameworks of entropy theory.

\begin{itemize}
    \item \textbf{Topological entropy} (following Milnor's axiomatic formulation):
    \[
    h_{\mathrm{top}}(F, \sigma) = \lim_{n \to \infty} \frac{1}{n} \log N(n, \epsilon),
    \]
    where \(N(n, \epsilon)\) counts \((\epsilon,n)\)-separated orbits.
    
    \item \textbf{Measure-theoretic entropy} (via Kolmogorov-Sinai theory):
    \[
    h_\mu(F) = \sup_{\mathscr{P}} \lim_{n \to \infty} \frac{1}{n} H_\mu\left(\bigvee_{k=0}^{n-1} F^{-k}\mathscr{P}\right),
    \]
    where \(\mathscr{P}\) denotes a measurable partition of \(\mathbb{Z}_m^{\mathbb{Z}}\), and \(\vee\) is the join operation of partitions:
    \[
    \mathscr{A} \vee \mathscr{B} = \{A \cap B \mid A \in \mathscr{A},\ B \in \mathscr{B}\}.
    \]
\end{itemize}

\begin{itemize}
    \item \textbf{LCA Entropy Formula}: For linear cellular automata,
    \[
    h_{\mathrm{top}}(F, \sigma) = \log \lambda_{\max}(F),
    \]
    where \(\lambda_{\max}(F)\) is the maximal Lyapunov exponent of \(F\).
    
    \item \textbf{Markov Measure Entropy}: For stationary Markov measures \(\mu = \mu_{\pi P}\) with transition matrix \(P\) and stationary distribution \(\pi\),
    \[
    h_\mu(\sigma) = -\sum_{i,j} \pi_i P_{ij} \log P_{ij}.
    \]
\end{itemize}

\section{The MTDE of $\mathbb{Z}^2$-actions over the ring $\mathbb{Z}_{m}$}\label{Directional-entropy}

In this section,  we investigate the  MTDEs of
$\mathbb{Z}^2$-actions generated by 1D LCA and the shift map. Let
us give some necessary definitions.
\begin{definition}\cite{Favati1997}\label{left-permutive}\
Let $T_{f[l, r]}: \mathbb{Z}_m^\mathbb{Z}\to
\mathbb{Z}_m^\mathbb{Z}$ be a 1D- LCA associated to the local rule
$f$  with minimal memory $Mem[l,r]$. The local rule $f$ and the
1D- LCA $T_{f[l, r]}$ are called:
\begin{enumerate}
    \item[(1)] \emph{leftmost} \emph{permutive} if $l < 0$ and $f$ is permutative in $x_l$;
    \item[(2)] \emph{rightmost} \emph{permutive} if $r > 0$ and $f$ is permutative in $x_r$;
    \item[(3)] \emph{bilaterally permutative} (bipermutative) if it is both leftmost and rightmost permutative.
\end{enumerate}
\end{definition}

The investigation of invariant measures, a cornerstone of ergodic
theory, has been rigorously explored through diverse
methodological lenses \cite{Sablik2007}. Notably, Chang and Chen
\cite{CC-2013} established explicit formulas for both
measure-theoretic and topological entropy of weakly permutive CAs
under invariant measures on the configuration space
\(\mathbb{Z}_m^{\mathbb{Z}}\). A critical property of 1D CAs
\(F_{f[l,r]}\) is their commutativity with the shift map
\(\sigma\) \cite{Hedlund1969}, necessitating the characterization
of invariant measures for the semigroup action generated by
\(F_{f[l,r]}\) and \(\sigma\).

Let \(\mathcal{M}(\mathbb{Z}_m^{\mathbb{Z}})\) denote the space of
probability measures on \(\mathbb{Z}_m^{\mathbb{Z}}\) defined over
the \(\sigma\)-algebra \(\mathcal{B}\). A measure \(\mu \in
\mathcal{M}(\mathbb{Z}_m^{\mathbb{Z}})\) is termed
\(\sigma\)-invariant (respectively \(F_{f[l,r]}\)-invariant) if
\(\sigma\mu = \mu\) (respectively \(F_{f[l,r]}\mu = \mu\)).
Consequently, \(\mu\) is \((F_{f[l,r]}, \sigma)\)-invariant if it
is invariant under both transformations \cite{Sablik2007}.

For CAs governed by local rules dependent on finite neighborhoods,
\(F_{f[l,r]}\) exhibits invariance under spatial translation
\(\sigma\). This permits defining the \textbf{topological entropy}
of \(F_{f[l,r]}\) along direction \((p, q) \in \mathbb{Z} \times
\mathbb{N}\) as the entropy of \(F_{f[l,r]}^{q} \circ \sigma^{p}\)
\cite{Bernardo-Co-2001}. Consider a normalized ergodic measure
\(\mu\) invariant under the \(\mathbb{Z}^2\)-action. Let \(h_{m,n}
= h(\sigma^{m}F^{n}, \mu)\) denote the measure-theoretic entropy
of the transformation \(\sigma^{m}F^{n}\), where \(F =
F_{f[l,r]}\) is a 1D linear CA (LCA) over \(\mathbb{Z}_m\). This
subsection focuses on Bernoulli and Markov measures within the
measurable space \((\mathbb{Z}_m^{\mathbb{Z}}, \mathcal{B})\).

Define the partition \(\xi = \{\!_0[0],\!_0[1],
\ldots,\!_0[p-1]\}\) of \(\mathbb{F}_p^{\mathbb{Z}}\), and let
\[
\xi_{m,n} = \Phi^{(m,n)}\xi = \sigma^m F_{f[l,r]}^n(\xi),
\]
where \(\Phi^{(m,n)}\) represents the joint action of \(\sigma^m\)
and \(F_{f[l,r]}^n\).

To calculate the directional entropy, Sinai \cite{Sinai1985}
introduced standard notation and fundamental concepts from the
theory of measurable partitions and measure-theoretic entropy. The
segment on the plane connecting the points \((a, 0)\) and \((a +
\omega^{-l}, 1)\) is denoted by \(I = I(a, \omega)\), while
\(\Gamma(a, \omega)\) represents the half-line given by \(y =
\omega(x-a)\) for \(y \leq 1\). Clearly, \(I(a, \omega)\) is
contained within \(\Gamma(a, \omega)\). Throughout, we will assume
\(\omega > 0\).
\begin{figure}[!htbp]
\centering
\includegraphics[width=60mm]{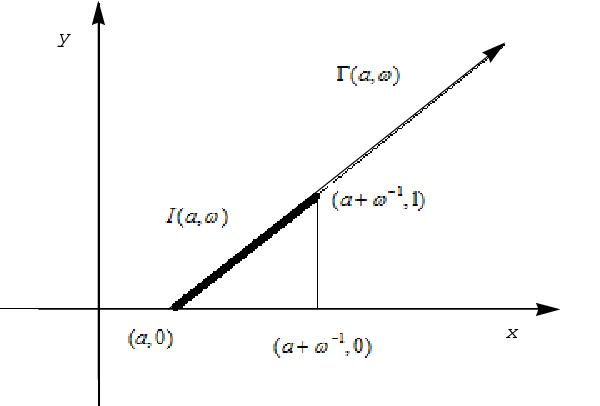}
\caption{The segment $I(a,\omega)$ on the plane joining the points
$(a, 0)$ and $(a +\omega^{-l}, 1)$.}\label{DE-segment}
\end{figure}
\begin{definition} Let $(X,\mathcal{A},\mu)$ be a measure space. Let $\alpha$ and $\beta$ be two partitions of $X$. The quantity
$$
H_{\mu}(\alpha|\beta)=-\sum\limits _{B\in \beta}\mu(B)\sum\limits
_{A\in \alpha}\mu(A|B)\log \mu(A|B)
$$
is called the conditional entropy of the partition $\alpha$ given
$\beta$.
\end{definition}
To prove the theorems, Sinai \cite{Sinai1985} deals with the
following conditional entropies:
$$
\begin{array}{l}
 \mathcal{H}_r(I)=H\left(\underset{m\geq a+\omega ^{-1}}{\bigvee }\xi _{m,1}|\underset{n=0}{\overset{\infty }{\bigvee }}
 \underset{m\geq a+\omega ^{-1}n}{\bigvee }\xi _{m,-n}\right) \\
 \mathcal{H}_l(I)=H\left(\underset{m\leq a+\omega ^{-1}}{\bigvee }\xi _{m,1}|\underset{n=0}{\overset{\infty }{\bigvee }}
 \underset{m\leq a+\omega ^{-1}n}{\bigvee }\xi _{m,-n}\right)\\
 \mathcal{H}(I) = \mathcal{H}_l(I) +\mathcal{H}_r(I).
\end{array}
$$

Let us consider a transformation $Q$ in the space of segments
$I(a, \omega)$, where $Q(I(a,\omega)) = I(a',to)$, $a'= a
+\omega^{-1}$. Our first result is the following theorem.

\begin{theorem}\cite[Theorem 1]{Sinai1985} Let $p>0, q>0$ have no common factor. Then
$$
h_{p,q}=\sum _{i=0}^{p-1} \mathcal{H}(Q^i(I))=p\int _0^1
\mathcal{H}(I)da
$$
for any interval $I = I(a,-q/p)$.
\end{theorem}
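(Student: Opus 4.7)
The proof decomposes naturally into two stages, one for each equality. The substantive work is concentrated in establishing the first equality; the second follows from an averaging argument.

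\textbf{Stage 1: the finite-sum identity.} I would begin by writing $h_{p,q}=h(\sigma^{p}F^{q},\mu)$ in Kolmogorov--Sinai form as the conditional entropy of the generating partition $\xi$ relative to the past generated by the iterates of $\sigma^{p}F^{q}$, and then refine this past using the full $\mathbb{Z}^{2}$-action of $\sigma$ and $F$, which commute with $\sigma^{p}F^{q}$. Under the coprimality hypothesis $\gcd(p,q)=1$, the lattice points of $\mathbb{Z}^{2}$ lying below the half-line $\Gamma(a,-q/p)$ split into exactly $p$ sub-lattices, the $i$-th being the $Q^{i}$-translate of the lattice attached to the original segment $I$. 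Applying the chain rule for conditional entropy once per sub-lattice, and identifying each summand as $\mathcal{H}_{l}(Q^{i}(I))+\mathcal{H}_{r}(Q^{i}(I))=\mathcal{H}(Q^{i}(I))$ via the definitions given above, yields
\[
h_{p,q}=\sum_{i=0}^{p-1}\mathcal{H}(Q^{i}(I)).
\]

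\textbf{Stage 2: the integral representation.} Since $h_{p,q}$ does not depend on the starting point $a$, neither does the sum on the right side. Integrating both sides of the first equality over $a\in[0,1]$ and interchanging sum and integral yields
\[
h_{p,q}=\sum_{i=0}^{p-1}\int_{0}^{1}\mathcal{H}(I(a+i\omega^{-1},\omega))\,da.
\]
By $\sigma$-invariance of $\mu$, a unit horizontal shift of the segment corresponds to applying $\sigma$ to every partition appearing in the definition of $\mathcal{H}$, so $a\mapsto\mathcal{H}(I(a,\omega))$ is periodic with period $1$. Each of the $p$ integrals therefore coincides with $\int_{0}^{1}\mathcal{H}(I(a,\omega))\,da$, producing the claimed factor $p$.

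\textbf{Main obstacle.} The difficulty is entirely in Stage 1: verifying that the $p$ shifted segments $Q^{i}(I)$, together with the left and right half-plane pasts implicit in $\mathcal{H}_{l}$ and $\mathcal{H}_{r}$, really give a clean chain-rule decomposition of the Kolmogorov--Sinai past of $\sigma^{p}F^{q}$. The coprimality of $p$ and $q$ is essential here, since otherwise the translates $Q^{i}(I)$ would either overlap or leave gaps and fail to constitute a fundamental domain for the sub-lattice $\mathbb{Z}(p,q)\subset\mathbb{Z}^{2}$. Further care is required to align the half-plane pasts that define $\mathcal{H}_{l}$ and $\mathcal{H}_{r}$ with the genuine past $\sigma$-algebra of $\sigma^{p}F^{q}$, and to show that the boundary contributions vanish in the limit when the past is extended to infinity; this is the technical heart of the argument.
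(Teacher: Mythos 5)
First, a point of reference: the paper does not prove this theorem. It is quoted, with attribution, as Theorem~1 of Sinai's note \cite{Sinai1985}, and the surrounding text only installs the notation $\mathcal{H}_l(I)$, $\mathcal{H}_r(I)$, $\mathcal{H}(I)$ and the translation $Q$ on segments. So there is no in-paper argument to measure your proposal against; the only meaningful comparison is with Sinai's original proof, whose overall architecture your outline does reproduce.

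Your Stage~2 is correct and essentially complete: the first equality holds for every $a$ while $h_{p,q}$ is independent of $a$, so integrating over $a\in[0,1]$, using the $1$-periodicity of $a\mapsto\mathcal{H}(I(a,\omega))$ (a consequence of $\sigma$-invariance of $\mu$) and the translation invariance of Lebesgue measure yields the factor $p$. Stage~1, however, is a plan rather than a proof, and the gap sits exactly where you place it. Two assertions are made without justification. First, that $h_{p,q}=h(\sigma^{p}F^{q},\mu)$ equals the conditional entropy of one row $\bigvee_{m}\xi_{m,1}$ given the staircase past lying under $\Gamma(a,-q/p)$: this requires the Kolmogorov--Sinai/Rokhlin identity for the generating partition together with a martingale argument (continuity of conditional entropy along increasing sequences of $\sigma$-algebras) to pass from finite joins to the infinite joins $\bigvee_{n=0}^{\infty}\bigvee_{m}\xi_{m,-n}$ appearing in $\mathcal{H}_l$ and $\mathcal{H}_r$. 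Second, that the chain rule over the $p$ translates $Q^{i}(I)$ produces summands \emph{exactly} equal to $\mathcal{H}_l(Q^{i}I)+\mathcal{H}_r(Q^{i}I)$: the conditioning $\sigma$-algebras that arise when you peel off the translates one at a time are nested one-sided pasts that do not coincide termwise with the half-plane pasts in the definitions of $\mathcal{H}_l$ and $\mathcal{H}_r$, and showing that the discrepancy contributes zero entropy in the limit is the substance of Sinai's lemmas, not a bookkeeping step. The coprimality and fundamental-domain picture you invoke is the right heuristic for why there are exactly $p$ translates, but it does not by itself supply these entropy estimates, so the first equality --- and with it the theorem --- remains unproved as written.
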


We recall the definitions and properties of the MTDE for
\(\mathbb{Z}^2\)-actions, using elementary entropy properties (see
\cite{Kaminski1999,Park-1995,Park-1999,Sinai-1959} for details).
Let \((X, \mathcal{B}, \mu)\) be a Lebesgue probability space, and
let \(\mathcal{Z}\) be the set of all countable measurable
partitions of \(X\) with finite entropy, equipped with the Rokhlin
metric:
\[
\varrho(P, Q) = H(P|Q) + H(Q|P),
\]
where \(H(P|Q)\) denotes the conditional entropy of \(P\) given
\(Q\), for \(P, Q \in \mathcal{Z}\). Consider a
\(\mathbb{Z}^2\)-action \(\Phi\) on \((X, \mathcal{B}, \mu)\). For
a set \(A \subset \mathbb{R}^2\) and \(P \in \mathcal{Z}\), we
define:
\[
P(A) = \bigvee_{(s,t) \in A \cap \mathbb{Z}^2} \Phi^{(s,t)} P.
\]
Let \(\vec{v} = (x, y)\) be a fixed vector in \(\mathbb{R}^2\),
and let \(\Gamma\) be the family of bounded subsets of
\(\mathbb{R}^2\). Let \((T, S)\) be an ordered pair of commuting
automorphisms of \(X\) generating \(\Phi\), i.e.,
\[
\Phi^{(m,n)} = T^m \circ S^n, \quad (m,n) \in \mathbb{Z}^2.
\]
For any partition \(P \in \mathcal{Z}\), the directional entropy
is given by:
\[
h_{\vec{v}}((T,S), P) = \sup_{B \in \Gamma} \limsup_{t \to \infty}
\frac{1}{t} H(P(B + [0,t)\vec{v})).
\]
We define the directional mean entropy of the
\(\mathbb{Z}^2\)-action \(\Phi\) with respect to \(P\) in the
direction \(\vec{v}\) as:
\[
h_{\vec{v}}(\Phi, P) = h_{\vec{v}}((T_0, S_0), P),
\]
where \(T_0 = \Phi^{(1,0)}\) and \(S_0 = \Phi^{(0,1)}\).

It is shown that:
\[
h_{\vec{v}}(\Phi, P) = \lim_{m \to \infty} \lim_{t \to \infty}
\frac{1}{t} H(P(R(\vec{v}, m, t))),
\]
where \(R(\vec{v}, m, t)\) is defined as:
\[
R(\vec{v}, m, t) = \left\{
\begin{array}{ll}
\{(i,j) \in \mathbb{Z}^2; 0 \leq j \leq [t y], -m + j \frac{x}{y} < i \leq m + j \frac{x}{y}\}, & \text{if } y \neq 0, \\
\{(i,j) \in \mathbb{Z}^2; -m < j \leq m, 0 \leq i \leq [t x]\}, &
\text{if } y = 0.
\end{array}
\right.
\]

\begin{definition}\label{directional-entropy1}
The quantity $h_{\vec{v}}\left( \Phi \right) =\sup\limits_{P
\in\mathcal{Z}}h_{\vec{v}}\left( \Phi, P \right)$ is called the
directional entropy of $\Phi$ in the direction $\vec{v}$, where
$\mathcal{Z}$ is the set of all countable measurable partitions of
$\mathbb{Z}_{p^{2}}^{\mathbb{Z}}$ with finite entropy (see
\cite{Courbage2002,Kaminski1999} for more details).
\end{definition}
Here, we consider $T_0=T_{f[l,r]}$ and $T_0=\sigma$. In this case,
from the definition of entropy, we get
\begin{itemize}
\item[(i)] for every $\alpha \in \mathbb{R}$
\begin{equation}\label{eq11}
h_{\alpha\vec{v}}(\Phi) = |\alpha| h_{\vec{v}}(\Phi).
\end{equation}
\item[(ii)]  If $\vec{v}=(s,t)\in \mathbb{Z}^2$ then
\begin{equation}\label{11eq12}
 h_{\vec{v}}(\Phi)=
h(\Phi^{(s,t)})= h(\sigma^{s}T_{f[l,r]}^{t}).
\end{equation}
\end{itemize}
Let $(x,y)\in \mathbb{R}^{2}$, denote $z_l=x+ly,z_r$ or
$z_r=x+ry.$ In \cite{Courbage2002}, Courbage and Kaminski have
proved the following results. They obtained exact formulas for
directional entropy of CA-action $\Phi$ satisfying invariant Borel
probability measure.

\begin{proposition}\cite[Proposition]{Courbage2002}
For any CA-action $\Phi$ on the compact metric space
$\mathbb{Z}_m^{\mathbb{Z}^{2}}$ and any $\Phi$-invariant Borel
probability measure we have
\begin{equation}\label{measure-direc-ent1aa}
h(\theta)=\left\{
\begin{array}{ll}
 \max (\left|z_l\right|,\left|z_r\right|)\log m, &if\ z_r z_l\geq 0, \\
 \left|z_r-z_l\right|\log m, & if\ z_r.z_l\leq 0.
\end{array}
\right.
\end{equation}
\end{proposition}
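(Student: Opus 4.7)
The plan is to use the scaling identity (\ref{eq11}) and the integer-direction formula (\ref{11eq12}) to reduce the computation to the single-transformation entropy $h(\sigma^{s}F^{t})$, recognize this transformation as itself a one-dimensional LCA whose effective neighborhood is $[z_{l},z_{r}]$, compute its entropy by counting the cone of dependence, and finally extend from rational to arbitrary real directions by continuity. Fix $\vec{v}=(x,y)$ with $y\neq 0$ and rational slope $x/y = s/t$ where $\gcd(s,t)=1$ and $t>0$. Writing $\vec{v}=\alpha(s,t)$ with $\alpha=y/t$, identity (\ref{eq11}) yields $h_{\vec{v}}(\Phi) = |\alpha|\,h_{(s,t)}(\Phi)$, and (\ref{11eq12}) further gives $h_{(s,t)}(\Phi) = h(\sigma^{s}F^{t})$, so the entire argument comes down to understanding this last quantity.

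Since $F = F_{f[l,r]}$ has one-step neighborhood $[l,r]$, its iterate $F^{t}$ depends on the block $[tl,tr]$, and composition with $\sigma^{s}$ shifts the neighborhood to $[s+tl,\,s+tr]$. Evaluating the defining expressions $z_{l}=x+ly$ and $z_{r}=x+ry$ at $(x,y)=(s,t)$ yields exactly those endpoints, so $\sigma^{s}F^{t}$ is itself an LCA with effective neighborhood $[z_{l},z_{r}]$. The standard entropy formula for a bipermutative LCA on $\mathbb{Z}_{m}^{\mathbb{Z}}$ under its maximal-entropy invariant Borel measure then yields
\[
h(\sigma^{s}F^{t}) = \begin{cases}\max(|z_{l}|,|z_{r}|)\log m, & z_{l}z_{r}\geq 0,\\ (z_{r}-z_{l})\log m, & z_{l}z_{r}\leq 0,\end{cases}
\]
which one verifies by counting the cardinality of the cone of dependence of a finite cylinder and using bipermutativity at both extremes to ensure that the newly exposed cells at each step are independently and uniformly distributed. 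Multiplying by $|\alpha|$ recovers (\ref{measure-direc-ent1aa}) on every rational ray, using that both sides are positively homogeneous of degree one in $\vec{v}$.

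To extend from rational to arbitrary real directions, I would invoke continuity. The right-hand side of (\ref{measure-direc-ent1aa}) is continuous in $(x,y)$ because the two piecewise expressions agree on the transition locus $z_{l}z_{r}=0$, both reducing to $\max(|z_{l}|,|z_{r}|)\log m$ there. The left-hand side is continuous in $\vec{v}$ through the limiting characterization of $h_{\vec{v}}(\Phi)$ over the regions $R(\vec{v},m,t)$, whose shape varies continuously in $\vec{v}$. Density of rational directions in $\mathbb{R}^{2}$ then completes the argument, with the degenerate case $y=0$ handled directly by the horizontal-rectangle branch of $R(\vec{v},m,t)$ supplied in the definition.

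The main obstacle I anticipate is rigorously justifying the entropy value of $\sigma^{s}F^{t}$ in the same-sign regime $z_{l}z_{r}>0$, when the cone of dependence lies entirely to one side of the vertical axis. Here a naive ``new cells per unit time'' tally must be corrected to avoid double-counting cells already exposed by earlier iterates, and both the bipermutativity of $F$ and the maximality of the invariant measure are essential. Sinai's conditional-entropy decomposition $\mathcal{H}(I)=\mathcal{H}_{l}(I)+\mathcal{H}_{r}(I)$ from Theorem 1 handles both sign regimes uniformly: in the same-sign case one of $\mathcal{H}_{l},\mathcal{H}_{r}$ vanishes and the surviving term equals $\max(|z_{l}|,|z_{r}|)\log m$, while in the opposite-sign case both contribute and sum to $|z_{r}-z_{l}|\log m$.
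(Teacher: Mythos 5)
The paper itself gives no proof of this Proposition: it is quoted verbatim from Courbage--Kami\'nski, and immediately after Theorem~\ref{Thm-MTDE1q} the author writes that the proofs are omitted and the reader is referred to \cite{Courbage2002,Kaminski1999}. So there is no in-paper argument to compare against, and your proposal has to stand on its own. As it stands it has a genuine mismatch with the statement being proved. The Proposition is asserted for \emph{any} CA-action and \emph{any} $\Phi$-invariant Borel probability measure (in \cite{Courbage2002} it is in fact an upper bound, $h_{\vec v}(\Phi)\le\dots$, which is the only way the statement can hold at this level of generality --- e.g.\ a $\Phi$-invariant Dirac measure gives directional entropy $0$, not the right-hand side). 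Your central step, however, invokes ``the standard entropy formula for a bipermutative LCA under its maximal-entropy invariant measure'': you have imported bipermutativity and the uniform measure, neither of which is a hypothesis. What the general statement requires is precisely the part you did not carry out --- a pure counting argument showing that the $\sigma$-algebra generated by $\bigvee_k \Phi^{(-sk,-tk)}\xi$ is measurable with respect to a block of coordinates whose cardinality grows like $\max(|z_l|,|z_r|)n$ (resp.\ $|z_r-z_l|n$), so that $H(\cdot)\le(\text{number of cells})\log m$ for \emph{every} measure. That is exactly the cone-of-dependence bound you gesture at in your last paragraph, but you frame it as an exact computation requiring permutativity rather than as the inequality that actually constitutes the proof.

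The second gap is the passage from rational to arbitrary real directions. You assert that $h_{\vec v}(\Phi)$ is continuous in $\vec v$ because the regions $R(\vec v,m,t)$ ``vary continuously''; but continuity of directional entropy in the direction is a nontrivial theorem (Park~\cite{Park1994}, cited in the paper for exactly this purpose) and does not follow from continuity of the regions --- in general, for $\mathbb{Z}^2$-actions, directional entropy can be discontinuous. For the upper bound this detour is unnecessary: the definition of $h_{\vec v}(\Phi,P)$ via $R(\vec v,m,t)$ makes sense for every real $\vec v$, and the same coordinate-counting estimate applies directly, with $z_l=x+ly$ and $z_r=x+ry$ real. Your identification of $\sigma^sF^t$ as an LCA with effective neighborhood $[z_l,z_r]$ at integer directions, and the homogeneity reduction via \eqref{eq11}, are correct and are the right skeleton; the proof needs the measure-free counting bound in place of the bipermutative entropy formula, and the direct treatment of irrational directions in place of the continuity appeal.
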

We can summarize the results obtained by Courbage and Kaminski
\cite{Courbage2002} as follows.

\begin{theorem}
\label{Thm-MTDE1q} Let $l, r\in \mathbb{Z}, l\leq r$, be given and
let $f : \mathbb{Z}_m^{r-l+1}\rightarrow \mathbb{Z}_m$ be a fixed
local rule. Then we have \begin{enumerate} \item[(1)]  If the
local rule $f$ is right permutative then
$$
h_{\vec{v}}(\Phi)=|z_r| \log m
$$ for all $\vec{v} =
(x, y)$ with $0\leq z_l\leq z_r$ or $z_r\leq z_l\leq 0.$
\item[(2)] If the local rule $f[l,r]$ is left permutative then
$$h_{\vec{v}}(\Phi) = |z_l | \log m $$
for all $\vec{v} =(x, y)$ with $ z_l\leq z_r\leq 0$ or $0\leq
z_r\leq z_l.$ \item[(3)] If the local rule $f$ is bipermutative
and $z_l.z_r\leq 0$, then
$$
h_{\vec{v}}(\Phi)= |z_r - z_l | \log m.
$$
\item[(4)] If $\Phi$ and $\Psi$ are $\mathbb{Z}^{2}$-actions and
$\Psi$ is a factor of $\Phi$ then $h_{\vec{v}}(\Phi)\geq
h_{\vec{v}}(\Psi)$.
    \item[(5)]If $\mathrm{P}\in \mathcal{Z}$  is
generator $\Phi$ then
\begin{equation}\label{11eq12a}
 h_{\vec{v}}(\Phi)=
h(\Phi,\mathrm{P}).
\end{equation}
\end{enumerate}
\end{theorem}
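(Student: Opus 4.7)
The plan is to derive parts (1)--(3) as direct specializations of the Proposition, reading off which branch of the formula (\ref{measure-direc-ent1aa}) is activated by the sign/ordering constraints on $z_l$ and $z_r$, and to prove parts (4)--(5) from standard Kolmogorov--Sinai arguments lifted to the directional setting via the limit characterization $h_{\vec{v}}(\Phi,P) = \lim_{m \to \infty}\lim_{t \to \infty} \frac{1}{t} H(P(R(\vec{v},m,t)))$.

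For (1), the hypotheses $0 \leq z_l \leq z_r$ and $z_r \leq z_l \leq 0$ both imply $z_l z_r \geq 0$ and $|z_r| = \max(|z_l|,|z_r|)$, so the first branch of (\ref{measure-direc-ent1aa}) reads $|z_r|\log m$. Right-permutativity is what promotes the corresponding inequality to equality, because it forces the local rule to be a bijection in its rightmost coordinate, so the conditional partition $\xi_{m,1}$ given its past along the cone of influence has full entropy $\log m$ per site traversed to the right. Part (2) is symmetric: $z_l \leq z_r \leq 0$ or $0 \leq z_r \leq z_l$ give $z_l z_r \geq 0$ and $|z_l| = \max(|z_l|,|z_r|)$, with left-permutativity supplying the matching lower bound. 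For (3), the assumption $z_l z_r \leq 0$ lands us directly in the second branch $|z_r - z_l|\log m$, and bipermutativity ensures that both the leftward and rightward dependencies contribute the full $\log m$ per site within the opening cone.

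For (4), I would use the factor map $\pi:(X,\Phi) \to (Y,\Psi)$ to pull back any $P \in \mathcal{Z}(Y)$ to $\pi^{-1}P \in \mathcal{Z}(X)$. Since $\pi \circ \Phi^{(s,t)} = \Psi^{(s,t)} \circ \pi$, we have $\pi^{-1}P(R) = \pi^{-1}(P(R))$ as partitions, so $H_\mu(\pi^{-1}P(R(\vec{v},m,t))) = H_{\pi_*\mu}(P(R(\vec{v},m,t)))$. Dividing by $t$, letting $t \to \infty$ and then $m \to \infty$, and finally taking the supremum over $P \in \mathcal{Z}(Y)$ gives $h_{\vec{v}}(\Psi) \leq h_{\vec{v}}(\Phi)$. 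For (5), if $P$ generates the $\sigma$-algebra under the $\mathbb{Z}^2$-action $\Phi$, then for every $Q \in \mathcal{Z}$ and $\varepsilon > 0$ one can find a finite region $B \subset \mathbb{Z}^2$ with $\varrho(Q, P(B)) < \varepsilon$ in the Rokhlin metric; the continuity of $h_{\vec{v}}(\Phi,\cdot)$ in $\varrho$ together with the identity $h_{\vec{v}}(\Phi,P(B)) = h_{\vec{v}}(\Phi,P)$ (which follows from $R(\vec{v},m,t) + B \subset R(\vec{v},m+C,t)$ for a $B$-dependent constant $C$) then forces $h_{\vec{v}}(\Phi,Q) \leq h_{\vec{v}}(\Phi,P)$, giving the equality in (\ref{11eq12a}).

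The main obstacle is the lower bound in parts (1) and (2): extracting $h_{\vec{v}}(\Phi) \geq |z_r|\log m$ (respectively $|z_l|\log m$) from one-sided permutativity alone. The Proposition is stated as an equality, but its proof for non-bipermutative rules requires carefully tracking how the joined partition $\xi_{m,n}$ spreads along the direction $\vec{v}$ within the dependence cone $[l,r]$ of the local rule, then matching the count of independent coordinates created in $R(\vec{v},m,t)$ against $|z_r|t\log m$. I would follow the approach of Courbage and Kaminski: use right-permutativity to construct, inside $R(\vec{v},m,t)$, a subgrid of sites on which the action $\sigma^s F^t$ behaves as a product of $\log m$-entropy factors, and bound the complement via $\xi_{m,n}$-refinement. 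The $\log m$ ceiling is automatic from the alphabet size, so the match is sharp, recovering (1)--(3).
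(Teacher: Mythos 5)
The paper itself offers no proof of this theorem: it states the results as a summary of Courbage--Kaminski and explicitly defers to \cite{Courbage2002,Kaminski1999}, so there is no in-paper argument to compare yours against. Judged on its own terms, your architecture for (1)--(3) --- upper bound from the propagation-cone Proposition \eqref{measure-direc-ent1aa}, lower bound from permutativity --- is indeed the route taken in the cited source, and your arguments for (4) (pulling back partitions through the factor map and using $H_\mu(\pi^{-1}P(R)) = H_{\pi_*\mu}(P(R))$) and (5) (Rokhlin-metric continuity of $h_{\vec{v}}(\Phi,\cdot)$ plus $h_{\vec{v}}(\Phi,P(B)) = h_{\vec{v}}(\Phi,P)$) are the standard Kaminski--Park arguments and are essentially sound.

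The genuine gap is in the lower bound for (1)--(3), and it is twofold. First, you acknowledge that the hard step is extracting $h_{\vec{v}}(\Phi)\geq |z_r|\log m$ from right-permutativity, but you only describe the intended construction (a subgrid of independent coordinates inside $R(\vec{v},m,t)$) without carrying it out; as written the proposal proves only the upper bound. Second, and more seriously, your key claim that permutativity forces ``full entropy $\log m$ per site'' is false for an arbitrary $\Phi$-invariant measure: the Dirac mass at the all-zero configuration is invariant under any linear CA and the shift, and gives $h_{\vec{v}}(\Phi)=0$ in every direction, so the stated equalities cannot hold at the level of generality suggested by the Proposition as transcribed (which in \cite{Courbage2002} is in fact only an upper bound $h(\theta)\leq\cdots$, with equality established separately for the uniform Bernoulli measure). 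Your proof must therefore locate where the choice of measure enters: permutativity makes the relevant conditional distributions uniform only when the underlying measure is the uniform product (Haar) measure, and that hypothesis has to be invoked explicitly before the per-site entropy $\log m$ and hence the matching lower bound can be claimed. Without it, the ``sharp match'' in your last paragraph does not go through.
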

We will not give proof of these results here, so we recommend that
the readers refer to the references
\cite{Courbage2002,Kaminski1999}.

\subsection{Examples}

For the examples we will give here, we will consider the unit
circle $\mathbb{S}^{1}=\{(x,y)\in \mathbb{R}^{2}:x^{2}+y^{2}=1\}$
instead of $\mathbb{R}^{2}$, in other words, we will calculate the
directional entropy of the function $\Phi$ in direction
$\vec{v}\in\mathbb{S}^{1}$.

\begin{figure}[!htbp]
\centering
\includegraphics[width=60mm]{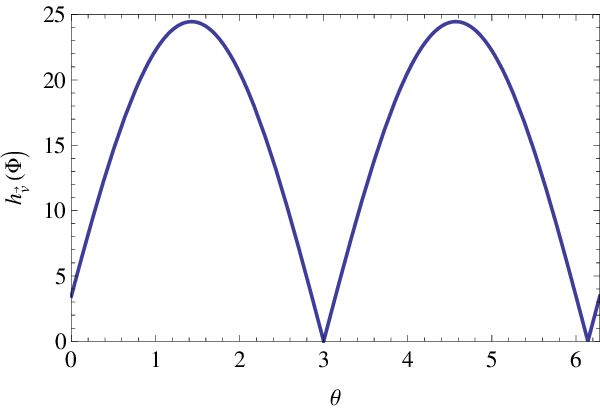}
\caption{The graph of the entropy function given in
\eqref{exam-MTDE-eq1}.}\label{MTDE1}
\end{figure}

\begin{example}\label{ex:lP-p=11} Let us consider the local rule as
$$
f(x_0,x_1,\cdots ,x_5)=2x_0+4x_1+3x_2+x_3+6x_4+7x_5\pmod {11}.
$$
From the definition \ref{left-permutive} (2), $f$ is the rightmost
permutative. Let $\vec{v}\in \mathbb{S}^{1}$. Then we have $z_r=x
+7y=\cos \theta +7\sin \theta$. From Theorem \ref{Thm-MTDE1q} (1),
we get
\begin{equation}\label{exam-MTDE-eq1}
h_{\vec{v}}(\Phi )=|z_r|\log 11=|\cos \theta +7\sin \theta |\log
11.
\end{equation}
Fig. \ref{MTDE1} shows the graph of function $h_{\vec{v}}(\Phi )$
given in \eqref{exam-MTDE-eq1} in the interval $[0,2\pi]$.
\end{example}
 
\begin{example}\label{ex:rP-p=19}  Let us consider the local rule as
$$
f(x_{-3},x_{-2},x_{-1} ,x_0)=6x_{-3}+3x_{-2}+5x_{-1}+2x_0\pmod
{19}.
$$
From the definition \ref{left-permutive} (1), $f$ is the leftmost
permutative. Let $\vec{v}\in \mathbb{S}^{1}$. Then we have $z_l=x
-3y=\cos \theta -3\sin \theta$. From Theorem 18 (1), we get
\begin{equation}\label{exam-MTDE-eq2}
h_{\vec{v}}(\Phi )=|z_l|\log 19=|x -3y |\log 19=|\cos \theta
-3\sin \theta |\log 19.
\end{equation}

\begin{figure}[!htbp]
\centering
\includegraphics[width=60mm]{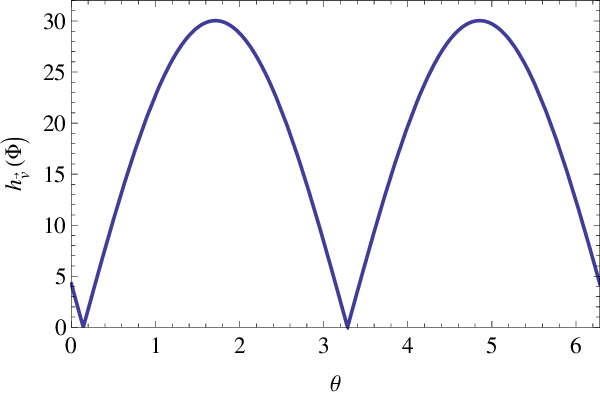}
\caption{The graph of the entropy function given in
\eqref{exam-MTDE-eq2}.}\label{MTDE2}
\end{figure}

Fig. \ref{MTDE2} shows the graph of function $h_{\vec{v}}(\Phi )$
given in \eqref{exam-MTDE-eq2} in the interval $[0,2\pi]$.
\end{example}

\begin{example}\label{ex:rbP-p=23} Let us consider the local rule as
$$f(x_{-2},x_{-1},x_0,x_1,x_2,x_3)=12x_{-2}+3x_{-1}+5x_0+4x_1+x_2+21x_3\pmod {23}.
$$
It is clear that the local rule $f$ is the bipermutative. Assume
that $\vec{v}\in \mathbb{S}^{1}$. Then, we get
$$
z_l=x-2y=\cos \theta -2\sin \theta,\ z_r=x+3y=\cos \theta+3\sin
\theta.
$$
\begin{figure}[!htbp]\label{DE-segment33}
\centering
\includegraphics[width=95mm]{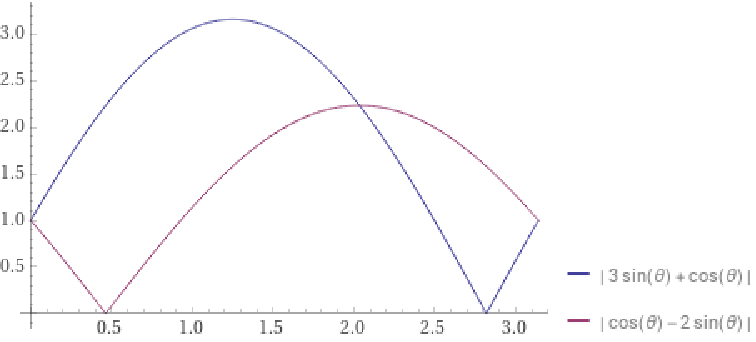}
\caption{The graphs of the functions $|z_l|$ and $|z_r|$
associated with the local rule
\eqref{exam-MTDE-eq2}.}\label{zlandzr}
\end{figure}
Now let us determine the intervals where the function $z_lz_r$ is
positive and negative. Consider
\begin{equation*}
A(\theta)=(\cos \theta -2\sin \theta)(\cos \theta+3\sin \theta).
\end{equation*}

From some elementary operations, one can show that $A(\theta)<0$
for $\theta\in(arccot(2),arccot(-3))$,  and $A(\theta)\geq 0$ for
$\theta\in[ 0,arccot(2)]\cup[arccot(-3),\pi]$. From the equation
\eqref{measure-direc-ent1aa}, we get
\begin{equation}\label{MTE-EXAM3a}
h_{\vec{v}}(\Phi) = \left\{
\begin{array}{ll}
|\cos \theta + 3 \sin \theta| \log 23, & \text{for } 0 \leq \theta \leq 0.46 \\
5 |\sin \theta| \log 23, & \text{for } 0.46 < \theta < 2.67795 \\
|\cos \theta - 2 \sin \theta| \log 23, & \text{for } 2.67795\leq \theta \leq \pi.
\end{array}
\right.
\end{equation}
\begin{figure}[!htbp]
\centering
\includegraphics[width=75mm]{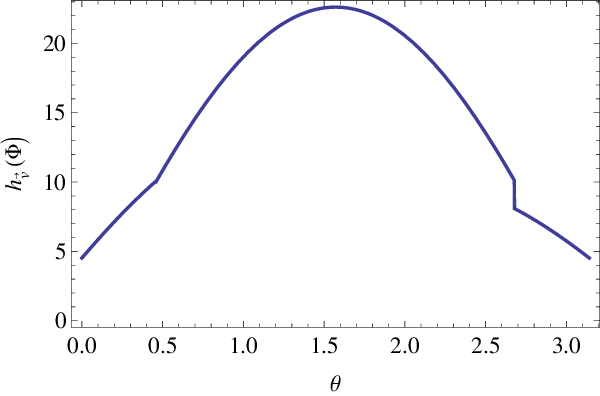}
\caption{The graph of function  $h_{\vec{v}}(\Phi )$ given in
\eqref{MTE-EXAM3a} in the interval $[0,\pi]$.}\label{Plot-MTE3}
\end{figure}
\end{example}

The examples presented in this section include one case each illustrating a left permutative local rule (Example~\ref{ex:lP-p=11}), a right permutative local rule (Example~\ref{ex:rP-p=19}), and a bipermutative local rule (Example~\ref{ex:rbP-p=23}).

\section{MTDE for Bernoulli and Markov measures}\label{sec:MTDE-B-M}
In this section, we derive upper bounds for the MTDE by applying the Bernoulli and Markov measures specifically to the family of cylindrical sets.

\subsection{The MTDE with respect to Bernoulli measure}

This subsection analyzes the MTDE of a \(\mathbb{Z}^2\)-action
generated by an invertible 1D LCA and the shift map on
\(\mathbb{Z}^{\mathbb{Z}}_{p^k}\). Unlike the method in
\cite{Courbage2002}, which uses the natural extension, we
establish an upper bound for the directional entropy w.r.t any
Bernoulli measure.

From the equation \eqref{eq11}, for all
$v=(x,y)=\sqrt{x^2+y^2}\left(\frac{x}{\sqrt{x^2+y^2}},\frac{y}{\sqrt{x^2+y^2}}\right)\in
\mathbb{R}^2$, we have
\begin{align}\label{MTDE-Sinai-1}
h_{\vec{v}}(\Phi
)=\sqrt{x^2+y^2}h_{(\frac{x}{\sqrt{x^2+y^2}},\frac{y}{\sqrt{x^2+y^2}})}(\Phi
). 
\end{align}
Let us consider a thin cylinder set
$$
C = _{a}[j_{0},\ j_{1},\cdots, j_{s}]_{s+a}=\{x\in
\mathbb{Z}_{m}^{\mathbb{Z}}:x_a=j_{0},\ldots, x_{a+s}=j_{s}\},
$$
where $j_0,\ j_1,\cdots, j_s \in \mathbb{Z}_{m}$.

In this subsection, one of our main results is the following
result.

\begin{theorem}\cite{AkinIJMPC-2011DE}\label{DirEnt-Ber-1}
Let \( f: \mathbb{Z}_{p^2}^{r-l+1} \rightarrow \mathbb{Z}_{p^2} \)
(\( p \) prime) be a linear local rule, and let \(\mu_{\pi}\) be
the Bernoulli measure associated with the probability vector \(\pi
= (p_0, p_1, \ldots, p_{p^2-1})\). For any direction \(\vec{v} =
(s, q) \in \mathbb{R}^2\) (assuming \( s \) is not large), the
directional entropy satisfies:
\[
0 \leq h_{\vec{v}}(\Phi) \leq -|q|(r-l) \sum_{i=0}^{p^2-1} p_i
\log p_i.
\]
\end{theorem}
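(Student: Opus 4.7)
The plan is to reduce the bound to a geometric counting problem by exploiting two ingredients: the generator property of the time-zero partition, and the product structure of the Bernoulli measure. Because $\Phi$ contains every shift $\sigma^i$, the partition $\xi = \{{}_0[0], {}_0[1], \ldots, {}_0[p^2-1]\}$ is a generator for $\mathcal{B}$, so Theorem~\ref{Thm-MTDE1q}(5) identifies
$$h_{\vec v}(\Phi) \;=\; h_{\vec v}(\Phi, \xi) \;=\; \lim_{m \to \infty} \limsup_{t \to \infty} \frac{1}{t}\, H_{\mu_\pi}\bigl(\xi(R(\vec v, m, t))\bigr).$$
The trivial lower bound $h_{\vec v}(\Phi) \geq 0$ is immediate from non-negativity of entropy, so only the upper bound requires work.

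For the upper bound, I would use the linearity of $F$ to pin down the coordinate support of each cell of $\Phi^{(i,j)}\xi$. Writing $F^{j}(x)_0 = f^{(j)}(x_{lj}, \ldots, x_{rj})$ for the iterated linear rule $f^{(j)}$, the atom of $\Phi^{(i,j)}\xi$ through $x$ is determined by the values of $x$ on the length-$(r-l+1)$ window $J_{i,j} = \{-i + lj, \ldots, -i + rj\}$. Consequently the joined partition $\xi(R(\vec v, m, t))$ is a coarsening of the full coordinate partition $\eta_S := \bigvee_{k \in S} \sigma^k \xi$, where $S = \bigcup_{(i,j) \in R(\vec v, m, t)} J_{i,j}$. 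Since the Bernoulli measure $\mu_\pi$ makes coordinates i.i.d.\ with law $\pi$, independence gives the additive identity
$$H_{\mu_\pi}\bigl(\xi(R(\vec v, m, t))\bigr) \;\leq\; H_{\mu_\pi}(\eta_S) \;=\; |S|\, H(\pi) \;=\; -|S| \sum_{i=0}^{p^2-1} p_i \log p_i.$$

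The proof then reduces to estimating $|S|$. For each fixed $j \in \{0, 1, \ldots, [tq]\}$ the slice contributes an interval of length $2m + j(r-l)$, translated by approximately $j\, s/q$. When $s/q \in [l, r]$—which is the technical content of ``$s$ is not large'', as $l$ and $r$ are fixed integers bounding the admissible slope—both the leftmost and rightmost endpoints of the union over $j$ are attained at the top slice $j = [tq]$, so the union is an interval of length at most $2m + [tq](r-l) + O(1)$. Dividing by $t$, letting $t \to \infty$ and then $m \to \infty$, yields the claimed bound $|q|(r-l)\, H(\pi)$; the case $q < 0$ is handled symmetrically and explains the absolute value. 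The main obstacle is precisely this geometric book-keeping: outside the range $s/q \in [l, r]$ the sliding of the intervals with $j$ outpaces their widening, producing an extra contribution of order $|s - lq|$ or $|s - rq|$ in $|S|$, and the clean bound fails—which is exactly why the hypothesis on $s$ is needed.
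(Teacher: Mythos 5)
Your proposal is correct and follows essentially the same route as the paper's proof: coarsen the joined partition to a coordinate-cylinder partition over a window $S$, use the i.i.d.\ structure of $\mu_\pi$ to get $H(\eta_S)=|S|\,H(\pi)$, and observe that the window grows by $(r-l)$ per application of $F$, giving the rate $|q|(r-l)H(\pi)$ --- the paper phrases this for integer $(s,q)$ as the entropy of $\sigma^{s}F^{q}$ with respect to the refining partitions $\xi(-i,i)$, whereas you work directly with the regions $R(\vec v,m,t)$, and you make explicit the geometric role of the hypothesis that $s$ is not large, which the paper leaves implicit. One cosmetic slip: the support window $J_{i,j}$ of the iterated rule $f^{(j)}$ has length $j(r-l)+1$, not $r-l+1$, although your subsequent slice count $2m+j(r-l)$ uses the correct value.
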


\begin{proof}
If \( p \) divides all coefficients \(\lambda_i\) of \( f \) for
\( i \in \{l, \ldots, r\} \), then \( h_{\vec{v}}(\Phi) = 0 \).
Assume \(\gcd(p, \lambda_i) = 1\) for at least one \( i \). For
\(\vec{v} = (s, q) \in \mathbb{Z}^2\) with \( q \geq 0 \), define
the zero-time partition \(\xi = \{_{0}[0], _{0}[1], \ldots,
_{0}[p^2-1]\}\) and its refinement:
\[
\xi(-i, i) = \bigvee_{u=-i}^i \sigma^{-u} \xi.
\]
Then, we have
\[
\bigvee_{k=0}^n \Phi^{(-sk, -qk)} \xi(-i, i) = \bigvee_{k=0}^n
\sigma^{-sk} F^{-qk}_{f[l,r]} \xi(-i, i) \preceq \xi(-i + n(ql -
s), i + n(ql - s)).
\]

The entropy bound follows from:
\begin{align*}
&H_{\mu_{\pi}}\left(\bigvee_{k=0}^n \sigma^{-sk} F^{-qk}_{f[l,r]} \xi(-i, i)\right) \leq H_{\mu_{\pi}}(\xi(-i + n(ql - s), i + n(ql - s))) \\
&= -\sum_{j_{-i + n(ql-s)}, \ldots, j_{i + n(ql-s)}=0}^{p^2-1}
p_{j_{-i + n(ql-s)}} \cdots p_{j_{i + n(ql-s)}} \log
\left(p_{j_{-i + n(ql-s)}} \cdots p_{j_{i + n(ql-s)}}\right).
\end{align*}

By induction, this sum simplifies to:
\[
-\left(nq(r-l) + 1\right) \sum_{i=0}^{p^2-1} p_i \log p_i.
\]
Thus, the mean entropy satisfies:
\[
h_{\mu_{\pi}}(\xi(-i, i), \sigma^s F^{q}_{f[l,r]}) \leq -q(r-l)
\sum_{i=0}^{p^2-1} p_i \log p_i.
\]
As \(\xi(-i, i)\) refines to the partition, we obtain:
\[
h_{\vec{v}}(\Phi) = h_{\mu_{\pi}}(\sigma^s F_{f[-l,r]}^{q}) \leq
-q(r-l) \sum_{i=0}^{p^2-1} p_i \log p_i.
\]

For \( q = -h < 0 \), the inverted LCA \( F_{g[-(2r-l), -r]}^h
\circ \sigma^s \) yields:
\[
h_{\mu_{\pi}}(\xi(-i, i), \sigma^s F^{h}_{g[-(2r-l), -r]}) \leq
-h(l-r) \sum_{i=0}^{p^2-1} p_i \log p_i.
\]
\end{proof}
Now let us present the following example, taking Theorem
\ref{DirEnt-Ber-1} into account:
\begin{example}\label{ex:MTDE-m=4-B} \cite{AkinIJMPC-2011DE}
Consider the linear local rule \[f(x_{-1}, x_0, x_1) = 2x_{-1} +
2x_0 + 3x_1 \pmod{4}. \] Its finite power series (FPS) is:
\[
U(X) = 2X + 2 + 3X^{-1} = 3X^{-1} \left(1 + 2X + 2X^2\right)
\pmod{4}.
\]
The inverse FPS \( S(X) \), computed modulo 4, is:
\[
S(X) = 3X \left(1 - 2X - 2X^2\right) = 3X - 6X^3 - 6X^4 \equiv 3X
+ 2X^2 + 2X^3 \pmod{4}.
\]
This corresponds to the local rule:
\[
g(x_{-3}, x_{-2}, x_{-1}) = 2x_{-3} + 2x_{-2} + 3x_{-1} \pmod{4}.
\]

Let \(\mu_{\pi}\) be the Bernoulli measure with probability vector
\(\pi = \left(\frac{1}{2}, \frac{1}{8}, \frac{1}{8},
\frac{1}{4}\right)\). For any direction \(\vec{v} = (s, q) \in
\mathbb{R}^2\), the directional entropy satisfies:
\[
h_{\vec{v}}(\Phi) \leq \frac{7}{2} |q| \log 2.
\]
\end{example}



\subsection{The MTDE with respect to Markov measure} \label{subsec:markov-measure-entropy}

In this subsection, we analyze the directional entropy of a
\(\mathbb{Z}^2\)-action \(\Phi\) generated by the shift
transformation \(\sigma\) and an invertible 1D linear CA with local rule \(f: \mathbb{Z}_{p^k}^{r-l+1} \to
\mathbb{Z}_{p^k}\) over the ring \(\mathbb{Z}_{p^k}\) (\(p\)
prime, \(k \geq 2\)), under the Markov measure \(\mu_{\pi T}\).
The analysis excludes considerations of natural extensions.

To compute the \textbf{MTDE}, we require:
\begin{itemize}
\item \textbf{Stochastic Transition Matrix}: A matrix \(T = (t_{ij})\) of size \(p^k \times p^k\), where \(t_{ij}\) denotes the transition probability from state \(i\) to \(j\), satisfying:
    \[
    \sum_{j=0}^{p^k-1} t_{ij} = 1 \quad \forall i \in \{0, \ldots, p^k - 1\}.
    \]
\item \textbf{Stationary Distribution}: A probability row vector \(\pi = (\pi_0, \pi_1, \ldots, \pi_{p^k-1})\) satisfying:
    \[
    \pi T = \pi \quad \text{and} \quad \sum_{i=0}^{p^k-1} \pi_i = 1.
    \]
\end{itemize}

The pair \((\pi, T)\) defines the Markov measure \(\mu_{\pi T}\) \cite{Denker1976}.
The main result of this subsection is the following theorem.
\begin{theorem} \label{thm:dir-entropy-bound} (\cite{AkinIJBC-2012DE})
Let \(f\) be the linear local rule \(f: \mathbb{Z}_{p^k}^{r-l+1}
\to \mathbb{Z}_{p^k}\). For any direction \(\vec{v} = (s, q) \in
\mathbb{R}^2\), the MTDE of the \(\mathbb{Z}^2\)-action \(\Phi\)
satisfies:
\[
0 \leq h_{\vec{v}}(\Phi, \mu_{\pi T}) \leq -|q|(r-l)
\sum_{i,j=0}^{p^k - 1} \pi_i t_{ij} \log t_{ij},
\]
where \(t_{ij}\) are entries of the stochastic matrix \(T\), and
\(s\) is assumed not to be large.
\end{theorem}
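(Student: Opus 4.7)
The plan is to adapt the proof of Theorem~\ref{DirEnt-Ber-1} to the Markov setting, keeping the same combinatorial skeleton provided by the LCA dynamics but replacing the product structure of the Bernoulli measure with the conditional structure of the Markov measure $\mu_{\pi T}$. First I would dispose of the trivial case: if $p$ divides every coefficient $\lambda_i$ of $f$ for $l \leq i \leq r$, then iterating $F_{f[l,r]}$ reduces the range of dependence modulo $p^k$ enough to force $h_{\vec{v}}(\Phi, \mu_{\pi T}) = 0$, so the inequality is trivial. Assume therefore $\gcd(p, \lambda_i) = 1$ for at least one $i$.

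Next, fix a direction $\vec{v} = (s,q) \in \mathbb{Z}^2$ with $q \geq 0$, introduce the zero-time partition $\xi = \{{}_0[0], {}_0[1], \ldots, {}_0[p^k - 1]\}$ and its $\sigma$-refinement $\xi(-i,i) = \bigvee_{u=-i}^{i} \sigma^{-u}\xi$. The key combinatorial step, carried over verbatim from the Bernoulli argument, is the refinement containment
\[
\bigvee_{k=0}^{n} \sigma^{-sk} F_{f[l,r]}^{-qk}\,\xi(-i,i) \;\preceq\; \xi\bigl(-i + n(ql - s),\; i + n(ql - s)\bigr),
\]
which holds because one application of $F_{f[l,r]}^{-q}$ widens the coordinate-dependence window by exactly $q(r-l)$, while $\sigma^{-s}$ translates it by $s$; over $n$ iterations the window length is $2i + 1 + n q(r-l) + O(n|s|)$, and only the $n q (r-l)$ term will matter after normalization.

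The heart of the proof is the entropy estimate for $H_{\mu_{\pi T}}(\xi(-N,N))$ with $N = i + n|ql - s|$. For a Markov measure, the probability of a thin cylinder factors as
\[
\mu_{\pi T}\bigl({}_{-N}[j_{-N}, \ldots, j_N]_N\bigr) = \pi_{j_{-N}} \prod_{u=-N}^{N-1} t_{j_u j_{u+1}},
\]
so the chain rule for conditional entropy gives the standard identity
\[
H_{\mu_{\pi T}}(\xi(-N,N)) = H(\pi) + 2N\Bigl(-\sum_{i,j=0}^{p^k-1} \pi_i t_{ij} \log t_{ij}\Bigr).
\]
Dividing by $n$, noting that the $H(\pi)$ term is a bounded boundary contribution, and then taking $n \to \infty$ followed by $i \to \infty$ (so that $\xi(-i,i)$ refines to a generator), one obtains
\[
h_{\vec{v}}(\Phi, \mu_{\pi T}) \leq -q(r-l)\sum_{i,j=0}^{p^k-1} \pi_i t_{ij}\log t_{ij},
\]
which is the desired bound for $q \geq 0$.

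Finally, for $q = -h < 0$ I would invoke the inverse LCA $F_{g[-(2r-l), -r]}^{h}$, exactly as in Theorem~\ref{DirEnt-Ber-1} and Example~\ref{ex:MTDE-m=4-B}, to recover the same estimate with $|q|$ in place of $q$; combining both cases gives the absolute-value form. The main obstacle I anticipate is controlling the interaction between the LCA's widening of partitions and the non-product structure of $\mu_{\pi T}$: in the Bernoulli proof independence makes the entropy of the refined partition factor immediately into an explicit sum, whereas in the Markov case one must carefully verify that the stationary Markov chain remains compatible with the $\sigma$-refinement (so that $H_{\mu_{\pi T}}(\xi(-N,N))$ really equals $H(\pi) + 2N h_{\mu_{\pi T}}(\sigma)$) and that edge effects from the partition $\xi(-i,i)$ and the constant $H(\pi)$ term both vanish under the successive limits; securing these two points is what turns the Bernoulli template into an honest Markov bound.
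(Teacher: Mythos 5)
Your proposal follows essentially the same route as the paper: the same trivial case when $p$ divides all coefficients, the same refinement containment $\bigvee_{k=0}^{n}\sigma^{-sk}F_{f[l,r]}^{-qk}\xi(-i,i)\preceq\xi(-i+n(ql-s),\,i+n(ql-s))$, the same explicit evaluation of the Markov cylinder entropy (the paper does it by induction on the cylinder length, you invoke the equivalent chain-rule identity $H(\pi)+2N\,h(\mu_{\pi T})$), and the same passage to the inverse LCA $F_{g[-(2r-l),-r]}$ for $q<0$. The argument is correct and matches the paper's proof in all essential respects.
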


\begin{proof}
If \(p\) divides all coefficients \(\lambda_i\) of \(f\) for \(i
\in \{l, \ldots, r\}\), then \(h_{\vec{v}}(\Phi) = 0\). Assume
\(\gcd(p, \lambda_i) = 1\) for at least one \(i\). For \(\vec{v} =
(s, q) \in \mathbb{Z}^2\) with \(q \geq 0\), define the zero-time
partition \(\xi = \{_{0}[0], \ldots, _{0}[p^k - 1]\}\). Then:
\[
\bigvee_{k=0}^n \sigma^{-sk} F^{-qk}_{f[l,r]} \xi(-i, i) \preceq
\xi(-i + n(ql - s), i + n(ql - s)).
\]
The entropy bound follows from:
\begin{align}\label{eq:entropy-estimate}
H_{\mu_{\pi T}}\left(\bigvee_{k=0}^n\sigma^{-sk}F^{-qk}_{f[l,r]}\xi\right) &\leq H_{\mu_{\pi T}}(\xi(n(ql-s),n(qr-s))) \nonumber \\
&= -\sum_{\substack{j_{n(ql-s)}, \ldots, \\ j_{n(qr-s)}=0}}^{p^k-1} \pi_{j_{n(ql-s)}} t_{j_{n(ql-s)}j_{n(ql-s)+1}} \cdots t_{j_{n(qr-s)-1}j_{n(qr-s)}} \nonumber \\
&\quad \times \log\left(\pi_{j_{n(ql-s)}}
t_{j_{n(ql-s)}j_{n(ql-s)+1}} \cdots
t_{j_{n(qr-s)-1}j_{n(qr-s)}}\right).
\end{align}

By induction, the sum in \eqref{eq:entropy-estimate} simplifies
to:
\[
-\left(nq(r-l) + 1\right) \sum_{i,j=0}^{p^k - 1} \pi_i t_{ij} \log
t_{ij}.
\]
Thus, the mean entropy satisfies:
\[
h_{\mu_{\pi T}}(\xi, \sigma^{s}F^{q}_{f[l,r]}) \leq -q(r-l)
\sum_{i,j=0}^{p^k - 1} \pi_i t_{ij} \log t_{ij}.
\]
As \(\xi(-i,i)\) refines to the partition, we obtain:
\[
h_{\vec{v}}(\Phi) = h_{\mu_{\pi T}}(\sigma^{s}F_{f[-l,r]}^{q})
\leq -q(r-l) \sum_{i,j=0}^{p^k - 1} \pi_i t_{ij} \log t_{ij}.
\]
For \(q = -h < 0\), analogous reasoning for the inverted LCA
\(F_{g[-(2r-l),-r]}\) yields:
\[
h_{\mu_{\pi T}}(\xi, \sigma^{s}F^{h}_{g[-(2r-l),-r]}) \leq -h(l-r)
\sum_{i,j=0}^{p^k - 1} \pi_i t_{ij} \log t_{ij}.
\]
\end{proof}

\subsubsection*{Remarks}
\begin{itemize}
    \item The Markov measure \(\mu_{\pi T}\) is generally not invariant under \(\Phi\).
    \item The bound decouples the directional scaling factor \(|q|(r-l)\) from the entropy rate of the Markov chain.
\end{itemize}

Now let us give the following examples.
\begin{example}\label{ex:MTDE-m=4-M} \label{ex:markov-entropy-bound} (\cite{AkinIJBC-2012DE})
Consider the local rule \( f(x_{-1}, x_0, x_1) = 2x_{-1} + 2x_0 +
3x_1 \pmod{4} \). The associated finite power series (FPS) is:
\[
U(X) = 2X + 2 + 3X^{-1} = 3X^{-1}\left(1 + 2X^1 + 2X^2\right)
\pmod{4}.
\]
The inverse FPS \( S(X) \) modulo 4 is computed as:
\[
S(X) = 3X\left(1 - 2X^1 - 2X^2\right) = 3X - 6X^2 - 6X^3 \equiv 3X
+ 2X^2 + 2X^3 \pmod{4}.
\]
This corresponds to the local rule:
\[
g(x_{-3}, x_{-2}, x_{-1}) = 2x_{-3} + 2x_{-2} + 3x_{-1} \pmod{4}.
\]

Given the stochastic matrix:
\[
T = \begin{pmatrix}
  \frac{1}{2} & \frac{1}{2} & 0 & 0 \\
  \frac{1}{8} & 0 & \frac{1}{8} & \frac{3}{4} \\
  0 & \frac{1}{16} & \frac{1}{16} & \frac{7}{8} \\
  0 & 0 & 1 & 0 \\
\end{pmatrix},
\]
solve \( \pi T = \pi \):
\begin{align}
  \frac{1}{2}\pi_0 + \frac{1}{8}\pi_1 &= \pi_0, \label{eq:pi0} \\
  \frac{1}{2}\pi_0 + \frac{1}{16}\pi_2 &= \pi_1, \label{eq:pi1} \\
  \frac{1}{8}\pi_1 + \frac{1}{16}\pi_2 + \pi_3 &= \pi_2, \label{eq:pi2} \\
  \frac{3}{4}\pi_1 + \frac{7}{8}\pi_2 &= \pi_3. \label{eq:pi3}
\end{align}

From \eqref{eq:pi0}: \( \pi_1 = 4\pi_0 \). From \eqref{eq:pi1}: \(
\pi_2 = 56\pi_0 \). From \eqref{eq:pi2} and \eqref{eq:pi3}: \(
\pi_3 = 52\pi_0 \). Normalizing \( \pi_0 + \pi_1 + \pi_2 + \pi_3 =
1 \):
\[
\pi = \left(\frac{1}{113},\, \frac{4}{113},\, \frac{56}{113},\,
\frac{52}{113}\right).
\]

\textbf{Directional Entropy Bound:}
The entropy rate \( h(\mu_{\pi T}) \) is computed as:
\[
h(\mu_{\pi T}) = -\sum_{i=0}^3 \pi_i \sum_{j=0}^3 t_{ij} \log
t_{ij} \approx 0.3502.
\]
By Theorem \ref{thm:dir-entropy-bound}, for any direction \(
\vec{v} = (s, q) \in \mathbb{R}^2 \), the directional entropy
satisfies:
\[
0 \leq h_{\vec{v}}(\Phi, \mu_{\pi T}) \leq 2|q| \cdot
\log\left(\frac{2^{1.638}}{3^{0.026} \cdot 7^{0.433}}\right)
\approx 0.5308|q|.
\]
This upper bound quantifies the asymptotic uncertainty rate along
\( \vec{v} \).
\end{example}
 
\begin{example}\label{ex:MTDE-m=9-M}  Let us consider local rule
\begin{align}\label{eq:LR-9}
f(x_{-1}, x_0, x_1)&= 4x_{-1} +3x_0 +3x_1 \pmod{3^{2}}.
\end{align} We obtain the finite power series \emph{fps} associated with $f$ as
$$
U(X) = 4X^{1}+3X^{0}+3X^{-1}= 4X^{1}[1+3(X^{-1}+X^{-2})].
$$
As a result, the inverse of $U$ may be found as follows:
$$
S(X) = 7X^{-1}[1+6(X^{-1}+X^{-2})].
$$
It is clear that $U(X)S(X)=1 \mod 9$. By some calculations, the
local rule assiciated with $S(X)$ is obtained as
$$g(x_{1}, x_{2}, x_{3})= 7x_{1}+ 6x_{2}+ 6x_{3} \pmod {9}.
$$
Now we compute the MTDE of the $\mathbb{Z}^2$-action generated by:
 the deterministic CA with local rule $f$ given in \eqref{eq:LR-9} and the shift map $\sigma$ with respect tothe Markov measure $\mu_{\pi T}$.

Let us consider the transition matrix \( T \), with all rows
summing to 1:
\[
T = \left[\begin{matrix}
\frac{1}{3} & 0 & \frac{1}{3} & 0 & 0 & 0 & \frac{1}{6} & \frac{1}{6} & 0\\
\frac{1}{18} & \frac{1}{18} & 0 & \frac{1}{3} & 0 & \frac{1}{3} & \frac{1}{9} & \frac{1}{9} & 0\\
0 & 0 & 0 & \frac{1}{2} & 0 & 0 & \frac{1}{4} & \frac{1}{4} & 0\\
\frac{1}{4} & 0 & \frac{1}{4} & 0 & \frac{1}{4} & 0 & 0 & \frac{1}{8} & \frac{1}{8}\\
\frac{1}{7} & \frac{1}{7} & \frac{1}{7} & 0 & \frac{1}{7} & 0 & \frac{1}{7} & \frac{1}{7} & \frac{1}{7}\\
0 & \frac{1}{6} & \frac{1}{6} & \frac{1}{12} & \frac{1}{12} & \frac{1}{12} & \frac{1}{12} & \frac{1}{6} & \frac{1}{6}\\
\frac{1}{5} & \frac{1}{5} & 0 & 0 & \frac{1}{5} & \frac{1}{10} & \frac{1}{10} & \frac{1}{10} & \frac{1}{10}\\
\frac{1}{18} & \frac{1}{18} & 0 & \frac{1}{9} & \frac{2}{9} & \frac{1}{3} & \frac{1}{9} & \frac{1}{18} & \frac{1}{18}\\
\frac{1}{5} & \frac{1}{5} & \frac{1}{10} & \frac{1}{10} & 0 &
\frac{1}{10} & \frac{1}{10} & \frac{1}{10} & \frac{1}{10}
\end{matrix}\right]
\]


We seek a probability column vector \( X  \) such that:
\[T \cdot X = X\]
Solving this eigenvalue problem and normalizing the result, after
some elementary operations we obtain the stationary distribution:
\[X= \left[\begin{matrix}0.1441\\0.0836\\0.1066\\0.1269\\0.0949\\0.1155\\0.1175\\0.1354\\0.0755\end{matrix}\right]\]
So, one obtains the stationary distribution vector \(\pi\) as:
\[
\pi =X^\top= \begin{bmatrix} 0.1441 & 0.0836 & 0.1066 & 0.1269 &
0.0949 & 0.1155 & 0.1175 & 0.1354 & 0.0755 \end{bmatrix}
\]

The entropy rate \( h(\mu_{\pi T}) \) is computed as:
\[
h(\mu_{\pi T}) = -\sum_{i=0}^8 \pi_i \sum_{j=0}^8 t_{ij} \ln
t_{ij}
\]

Let us compute for $i=0,1,\cdots,8$ the row-wise entropies
\[H_i=\sum_{j=0}^8 t_{ij} \ln t_{ij}.\]

\[
\begin{array}{|c|c|c|c|}
\hline
\text{State } i & \text{Entropy } H_i & \pi_i & \pi_i H_i \\
\hline
0 & -\left( \frac{2}{3}\ln{\frac{1}{3}} + \frac{1}{6}\ln{\frac{1}{6}} + \frac{1}{6}\ln{\frac{1}{6}} \right) \approx 1.3297 & 0.1441 & 0.1913 \\
1 & -\left(\frac{2}{18}\ln{\frac{1}{18}} + \frac{2}{3}\ln{\frac{1}{3}} + \frac{2}{9}\ln{\frac{1}{9}} \right) \approx 1.5418 & 0.0836 & 0.1288 \\
2 & -\left( \frac{1}{2}\ln{\frac{1}{2}} + 2 \cdot \frac{1}{4}\ln{\frac{1}{4}} \right) \approx 1.0397 & 0.1066 & 0.1108 \\
3 & -\left( 3 \cdot \frac{1}{4}\ln{\frac{1}{4}} + \cdot \frac{2}{8}\ln{\frac{1}{8}} \right) \approx 1.5595 & 0.1269 & 0.1976 \\
4 & -\left( 7 \cdot \frac{1}{7}\ln{\frac{1}{7}} \right) = \ln{7} \approx 1.9459 & 0.0949 & 0.1845 \\
5 & -\left(\frac{4}{6}\ln{\frac{1}{6}} + \frac{4}{12}\ln{\frac{1}{12}} \right) \approx 2.0228 & 0.1155 & 0.2336 \\
6 & -\left(\frac{3}{5}\ln{\frac{1}{5}} + 4 \cdot \frac{1}{10}\ln{\frac{1}{10}} \right) \approx 1.8866 & 0.1175 & 0.2216 \\
7 & -\left(\frac{4}{18}\ln{\frac{1}{18}} + \frac{4}{9}\ln{\frac{1}{9}} + \frac{1}{3}\ln{\frac{1}{3}} \right) \approx 1.8309 & 0.1354 & 0.2480 \\
8 & -\left(\frac{2}{5}\ln{\frac{1}{5}} + \frac{5}{10}\ln{\frac{1}{10}} \right) \approx 1.7951 & 0.0755 & 0.1355 \\
\hline
\multicolumn{3}{|r|}{\mathbf{Total\ } h(\mu_{\pi T})} & \mathbf{1.6517} \\
\hline
\end{array}
\]

For any direction vector \((a, b) \in \mathbb{Z}^2\), the
directional entropy is:
\[
h_{(a, b)}(\mu_{\pi T}) = |b| \cdot h(\mu_{\pi T}) = |b| \cdot
1.6517.
\]
Examples:
\begin{itemize}
    \item Along vector \((0, 1)\): \( h_{(0, 1)} = 1.6517\),
    \item Along vector \((2, 3)\): \( h_{(2, 3)} = 3 \times 1.6517 = 4.9551\).
\end{itemize}
\end{example}

\paragraph{Methodological Note}  
Throughout this section, the examples of 1D-CAs—specifically  
\ref{ex:MTDE-m=4-B}, \ref{ex:MTDE-m=4-M}, and \ref{ex:MTDE-m=9-M}—are deliberately constructed to satisfy the property of \textit{reversibility}. This ensures that every configuration within the automaton’s state space admits a unique predecessor under its evolution rules, preserving invertibility in both topological and measure-theoretic frameworks..  

\subsection{Comments and References} \label{sec:comments-refs}
The findings in Section~\ref{Directional-entropy} build upon
foundational work in the entropy theory of dynamical systems,
extending and integrating results from key studies such
as~\cite{Akin-ergodic-2005, Akin-AMC-2005a,AkinIJMPC-2011DE,
AkinIJBC-2012DE, Courbage2002,Kaminski1999, Milnor1986,
Milnor1988, Park1994,Park-1995, Sinai1985}. Notable contributions
that serve as a basis for this development include:
\begin{itemize}
    \item \textbf{Milnor's Contribution}: The foundational concept of directional entropy in \(\mathbb{Z}^2\)-actions was introduced by Milnor~\cite{Milnor1986}, laying the groundwork for analyzing entropy along specific spatial directions.
    \item \textbf{Entropy Continuity}: Park~\cite{Park1994} established that for systems generated by CAs, directional entropy varies continuously with direction vectors \(\vec{v} \in \mathbb{R}^2\), under invariant Borel probability measures.
\end{itemize}
In earlier work, Akin~\cite{Akin-AMC-2005a} derived explicit
expressions for MTDE associated with \(\mathbb{Z} \times
\mathbb{Z}^+\)-actions driven by shift transformations and
additive one-dimensional CA. However, extending such calculations
to more general settings remains a complex task due to:
\begin{itemize}
    \item The analytical challenges of harmonizing measure-theoretic and topological entropy frameworks,
    \item The intricate combinatorial structures arising from multidimensional state dependencies.
\end{itemize}
The present section further advances this line of inquiry by
exploring directional entropy in \(\mathbb{Z}^2\)-actions under
the structural conditions articulated by Courbage et
al.~\cite{Courbage2005}. This approach facilitates a synthesis of
theoretical and computational techniques, promoting a deeper
convergence between classical ergodic theory and modern
entropy-based methodologies.
\subsubsection*{Relevance and Applications} The bounds and continuity results presented here contribute to a more nuanced understanding of entropy anisotropy in spatially extended systems, with direct implications for:
\begin{itemize}
    \item Modeling information transfer in biological and cellular networks,
    \item Quantifying uncertainty in spatially distributed dynamical processes.
\end{itemize}

\section{Topological Directional Entropy}\label{topological-directional-entropy}

This section investigates the \textbf{TDE} for
\(\mathbb{Z}^2\)-actions induced by 1D CAs and shift
transformation. The analysis extends the methodological framework
proposed by D’Amico et al. \cite{Damico2003}. Building on this
foundation, prior research \cite{Akin2009-DEnt} derived explicit
formulas for the TDE of \(\mathbb{Z}^2\)-actions governed by
linear LCAs and shifts over the ring
\(\mathbb{Z}_m\), establishing theoretical benchmarks for such
systems.

For structurally complex or nonlinear CA, Courbage
\cite{Courbage2002} introduced a generalized measure of
spatiotemporal complexity. This approach quantifies directional
entropy through "windows" aligned with the \(\theta\)-axis in the
space-time plane (Fig.~\ref{fig1Cregion}), capturing entropy
anisotropy and directional dependencies in system evolution.

Essentially the following key contributions will be presented:
\begin{itemize}
    \item \textbf{Extended Methodological Framework}: Generalizes D’Amico et al.’s approach \cite{Damico2003} to compute TDE for \(\mathbb{Z}^2\)-actions induced by 1D CA and shifts.
    \item \textbf{Explicit Entropy Formulas}: Derives closed-form TDE expressions for \(\mathbb{Z}^2\)-actions driven by linear CA over \(\mathbb{Z}_m\) \cite{Akin2009-DEnt}.
    \item \textbf{Anisotropy Quantification}: Proposes Courbage’s directional "window" method \cite{Courbage2002} to analyze entropy anisotropy in nonlinear CA via space-time directional windows (Fig.~\ref{fig1Cregion}).
    \item \textbf{Unified Benchmarks}: Establishes continuity and bounds for directional entropy, unifying linear and nonlinear CA analysis.
\end{itemize}
%
\begin{figure} [!htbp]
\centering
\includegraphics[width=55mm]{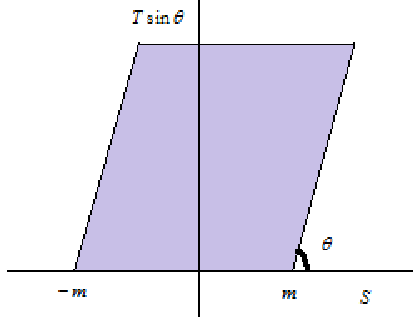}\ \
\caption{The parallelogram $W(m, T, \theta$)}\label{fig1Cregion}
\end{figure}

Consider an orbit of the 1D CA \( T_{f[l,r]} \) in \(
\mathbb{Z}_p^{\mathbb{Z}} \), which is represented by the sequence
\( \{x^{t}\}_{t \in \mathbb{Z}^{+}} = \{x^{t}_{n}\}_{n \in
\mathbb{Z}}, t \in \mathbb{Z}^{+} \), where \( x^{t+1}_{n} =
f(x^{t}_{n+l}, \dots, x^{t}_{n+r}) \). The window shown in Fig.
\ref{fig1Cregion} is defined by:

\[
W(m,T,\theta) = \{(x + y \cos \theta, y \sin \theta) : -m < x < m,
0 < y \leq T \}.
\]

\begin{definition} \cite{Courbage21}
Given \( W(m,T,\theta) \) and \( \epsilon > 0 \), a set \( M
\subset \mathbb{Z}_p^{\mathbb{Z}} \) is called\\ \(
\left(\epsilon, W(m,T,\theta)\right) \)-separated if for every
pair \( x, \overline{x} \in M \), there exists \( (n,t) \in
W(m,T,\theta) \) such that:

\[
d\left( \left( T^{t}_{f[l,r]} x \right)_n, \left( T^{t}_{f[l,r]}
\overline{x} \right)_n \right) \geq \epsilon,
\]

where

\[
d(x,y) = \sum_{i \in \mathbb{Z}} \frac{|x_i - y_i|}{p^{|i|}},
\quad \text{for} \quad x, y \in \mathbb{Z}_p^{\mathbb{Z}}.
\]

\end{definition}

The number of distinct orbits with accuracy \( \epsilon \) in \(
W(w,T,\theta) \) is defined as:

\[
N(\epsilon, W(w,T,\theta)) = \max \{ \#(M) : M \text{ is an }
(\epsilon, W(m,T,\theta)) \text{-separated set} \}.
\]

The limit:
\[
h_{\theta}(\mathbb{Z}_p^{\mathbb{Z}^2}, \Phi) = \lim_{\varepsilon
\to 0} \left( \overline{\lim_{m \to \infty}} \frac{1}{2m+1} \left(
\overline{\lim_{T \to \infty}} \frac{1}{T \sin \theta} \ln \left(
N(\epsilon, W(m,T,\theta)) \right) \right) \right)
\]
is referred to as the density of the TDE of \(
(\mathbb{Z}_p^{\mathbb{Z}^2}, \Phi) \) in the direction \( \theta
\) (see \cite{Sinai1985} for details).

Let us consider the local role
\begin{equation}\label{bipermutative}
f(x_{-l},\ldots,x_r)=\sum\limits_{i=-l}^r a_ix_i\mod p,
\end{equation}
where $a_{-l}\neq 0$, $a_{r}\neq 0$ and $\gcd(a_{-l},p)=1$,
$\gcd(a_{r},p)=1$.

The formulae of the DE for bipermutative CAs has
been given by Milnor \cite{Milnor1988}, for the CA associated with
the local rule \eqref{bipermutative}. In the special case $l < 0 <
r$, Bernardo and Courbage \cite{Bernardo-Co-2001} have provided
another expanded proof for 1D CA with the bipermutative local
rule.

Denoting $\cot \theta_{l} = -l, \cot \theta_{r}=-r$, Courbage
\cite{Courbage2002} has presented an explicit formula for the DE
by:
\begin{equation}\label{top-direc-ent1}
h_{\theta }(\mathbb{Z}_p^{\mathbb{Z}^2},T_{f[l,r]})=\left\{
\begin{array}{ll}
 \text{(cos(}\theta )+r\sin (\theta ))\ln p, & \text{for } \theta \in [0,\theta _l], \\
 (r-l)\sin (\theta )\ln p, & \text{for } \theta \in [\theta _l,\theta _r], \\
 \text{(cos(}\theta )+l\sin (\theta ))\ln p, & \text{for } \theta \in [\theta _r,\pi].
\end{array}
\right.
\end{equation}
\begin{remark}
If the angle $\theta =\pi/2$, the equation \eqref{top-direc-ent1}
coincides with the formula of the topological entropy of 1D LCA.
\end{remark}

\begin{example}\label{ex:TDE-p=5} Let us consider the local rule given by
\begin{equation}\label{biper-rule-l=-4-r=3}
f(x_{-4},x_{-3},\cdots ,x_2,x_3)=3x_{-4}+2x_{-3}+3x_2+4x_3\pmod 5.
\end{equation}
Then, it is clear that the CA $T_{f[-4,3]}$ is
bipermutative. 
From \eqref{top-direc-ent1}, we compute the TDE as:
\begin{align}\label{Ex:Top-Ent-1}
h_{\theta }(\mathbb{Z}_5^{\mathbb{Z}^2},\Phi )&= \left\{
\begin{array}{ll}
 (\cos \theta + 3\sin \theta )\log 5, & \text{if } 0\leq \theta \leq \cot^{-1} (4), \\
 7\sin \theta \log 5, & \text{if }  \cot^{-1} (4) \leq \theta \leq \pi-\cot^{-1} (3), \\
 |\cos \theta - 4\sin \theta |\log 5, & \text{if }  \pi-\cot^{-1} (3) \leq \theta \leq \pi.
\end{array}
\right.
\end{align}

This expression defines the DE as a piecewise function of the
angle $\theta$ over distinct angular intervals. Figure~\ref{direc-ent-segment1} illustrates the critical angular
boundaries at $\theta_{-4} = \arcsin\left(1/\sqrt{17}\right)$ and
$\theta_3 = \pi - \arcsin\left(1/\sqrt{10}\right)$. As established
in~\cite{Akin2009-DEnt}, a 1D LCA governed by a bipermutative
local rule divides the angular domain into three characteristic
regions. The directional entropy segments corresponding to the
combined transformation of the CA (defined by the local rule
in~\eqref{top-ent-ex2}) and shift map  are presented in
Figure~\ref{direc-ent-segment1a}. Furthermore,
Figure~\ref{plot-DirectionalEnt1} displays the complete entropy
profile $h(\theta)$ across the interval $\theta \in [0, \pi]$.

We emphasize that these angular sectors are completely
characterized by the underlying local rules of the CA.

\begin{figure} [!htbp]\label{direc-ent-segment1}
\centering
\includegraphics[width=75mm]{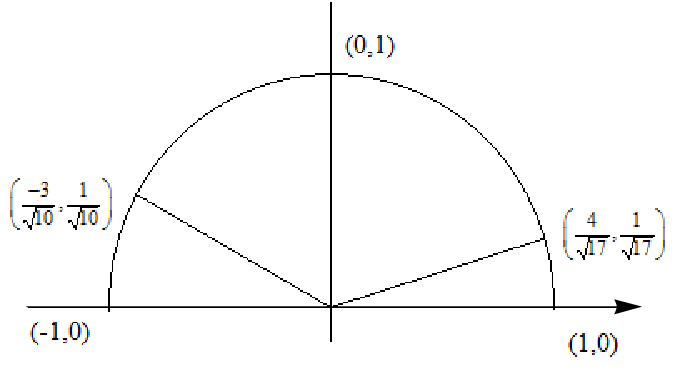}\ \
\caption{The sectors corresponding to angles $\theta
_{-4}=\arcsin(\frac{1}{\sqrt{17}})$ and $\theta
_3=\pi-\arcsin(\frac{1}{\sqrt{10}})$. }\label{direc-ent-segment1a}
\end{figure}

\begin{figure} [!htbp]
\centering
\includegraphics[width=75mm]{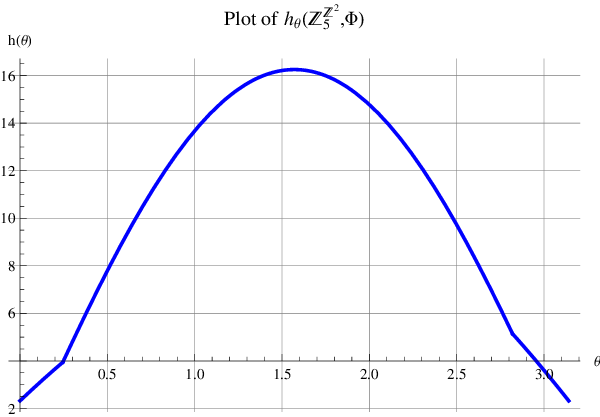}\ \
\caption{The graph of the function given in \eqref{Ex:Top-Ent-1}
for bipermutative CA generated by the local rule given by
\eqref{biper-rule-l=-4-r=3}.}\label{plot-DirectionalEnt1}
\end{figure}
\end{example}

\subsection{DTE for ring $m=p^k$} In the following Lemma we present a DTE formula for the ring $m=p^k$.
\begin{lemma}(\cite{Akin2009-DEnt})\label{Akin2009-DEnt-lemma2}\ Let $T_{f[-r,r]}$ be 1D LCA over the ring $\mathbb{Z}_{p^{k}}$ with local rule
\begin{equation}\label{local-rule-p-k}
f(x_{-r},\ldots,x_r)=\sum\limits_{i=-r}^r a_ix_i\mod p^k,
\end{equation}
where $p$ is prime number, $a_{-r}\neq 0$, $a_{r}\neq 0$. Assume
that $f$ is both rightmost and leftmost permutative. Define
\begin{equation}\label{Left-Right-perm1}
P=\{0\}\cup \{j:gcd(a_j, p)=1 \} =\{j_1,j_2,\cdots, j_t\},\ L=\min
P and\ R= \max P
\end{equation}
\begin{equation}
\textbf{P}=\{arccot(-L)=\theta_L,arccot(-R)=\theta_R \}
\end{equation}
Assume that $\Phi$ is a $\mathbb{Z}^2$-action generated by
$T_{f[-r,r]}$ and $\sigma$. Then we have
\begin{equation}\label{top-direc-ent2}
h_{\theta}(\mathbb{Z}^{\mathbb{Z}^2}_{p^{k}}, \Phi) =
\begin{cases}
k |(\cos(\theta) + R \sin(\theta))| \log p, & \text{for } \theta \in [0, \theta_L], \\
k |(R - L) \sin(\theta)| \log p, & \text{for } \theta \in [\theta_L, \theta_R], \\
k |(\cos(\theta) + L \sin(\theta))| \log p, & \text{for } \theta
\in [\theta_R, \pi].
\end{cases}
\end{equation}
\end{lemma}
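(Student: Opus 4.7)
The plan is to reduce the computation over $\mathbb{Z}_{p^k}$ to the bipermutative formula \eqref{top-direc-ent1} via the $p$-adic decomposition of the ring. Writing each element uniquely as $x=\sum_{i=0}^{k-1}p^{i}y^{(i)}$ with $y^{(i)}\in\{0,\ldots,p-1\}$ gives a bijection $\mathbb{Z}_{p^k}^{\mathbb{Z}}\cong(\mathbb{Z}_p^{\mathbb{Z}})^{k}$ and a filtration $\mathbb{Z}_{p^k}\supset p\mathbb{Z}_{p^k}\supset\cdots\supset p^{k-1}\mathbb{Z}_{p^k}\supset\{0\}$ which, by linearity of $f$ in \eqref{local-rule-p-k}, is preserved by $T_{f[-r,r]}$. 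On each successive quotient $p^{i}\mathbb{Z}_{p^k}/p^{i+1}\mathbb{Z}_{p^k}\cong\mathbb{Z}_p$, the induced map is $T_{\bar f[L,R]}$, where $\bar f=\sum_{j\in P}\bar a_j x_j \bmod p$ is the mod-$p$ reduction. By the definition of $P$ in \eqref{Left-Right-perm1}, $\bar f$ is bipermutative with leftmost index $L$ and rightmost index $R$, so \eqref{top-direc-ent1} already yields the directional entropy of the quotient system with $(l,r)$ replaced by $(L,R)$.

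\textbf{Key multiplicative identity.} I would then establish
\[
N\bigl(\epsilon,W(m,T,\theta)\bigr)_{\mathbb{Z}_{p^k}}\;=\;N\bigl(\epsilon,W(m,T,\theta)\bigr)_{\mathbb{Z}_p}^{\,k}
\]
for every $\epsilon$ smaller than $p^{-k}$, so that $(\epsilon,W)$-separation is detected by at least one distinct site-value somewhere in $W(m,T,\theta)$. Fixing the lower layers $y^{(0)},\ldots,y^{(i-1)}$, the evolution of the $i$-th layer $y^{(i)}$ under $T_{f[-r,r]}$ coincides with $T_{\bar f[L,R]}$ up to an additive correction determined solely by those lower layers and by the carries intrinsic to addition in $\mathbb{Z}_{p^k}$. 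Because this correction acts as a translation on $\mathbb{Z}_p^{\mathbb{Z}}$, it cannot affect the maximal $(\epsilon,W)$-separated cardinality at the $i$-th layer, so the separations on the $k$ layers contribute independently and their counts multiply.

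\textbf{Conclusion from \eqref{top-direc-ent1}.} Taking $\log$, dividing by $(2m+1)\,T\sin\theta$, and passing to the iterated limits in the definition of $h_\theta$ converts the multiplicative identity into
\[
h_\theta\bigl(\mathbb{Z}_{p^k}^{\mathbb{Z}^2},\Phi\bigr)\;=\;k\cdot h_\theta\bigl(\mathbb{Z}_p^{\mathbb{Z}^2},T_{\bar f[L,R]}\bigr).
\]
Substituting \eqref{top-direc-ent1} with $l=L$, $r=R$, and $\cot\theta_L=-L$, $\cot\theta_R=-R$, directly produces the three-piece expression \eqref{top-direc-ent2}, the absolute values being forced so that the formula remains nonnegative throughout $[0,\pi]$. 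The angular sectors $[0,\theta_L]$, $[\theta_L,\theta_R]$, $[\theta_R,\pi]$ split exactly at the directions where the window transitions between being dominated by the left light-cone of the projected rule, the full light-cone width, and the right light-cone, respectively.

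\textbf{Main obstacle.} The delicate point is the layer-decoupling step: although the $p$-adic filtration is invariant under $T_{f[-r,r]}$, the induced dynamics on the $i$-th layer depends nontrivially on the lower layers through carries in $\mathbb{Z}_{p^k}$. To verify that this dependence acts as a pure translation on $\mathbb{Z}_p^{\mathbb{Z}}$ --- and therefore preserves the maximal separated cardinality at each layer --- I would expand the $p$-adic addition digit by digit and extract the coupling term on the $i$-th layer. The crucial observation is that the leading contribution at each layer comes only from coefficients $a_j$ with $\gcd(a_j,p)=1$ (i.e.\ $j\in P$), which is precisely why the effective bipermutative indices $L=\min P$ and $R=\max P$ --- rather than $-r$ and $r$ --- appear in the final formula. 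Once this layer-independence is secured, the angular case analysis at $\theta_L$ and $\theta_R$ is routine and inherits directly from the corresponding split in \eqref{top-direc-ent1}.
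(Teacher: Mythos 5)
Your route is genuinely different from the one the paper takes. The paper's proof works with the finite power series $F(X)=\sum_{i=-r}^{r}a_iX^{i}$ and argues that some iterate $f^{(n)}$ is permutative at the effective extremes and ``does not depend'' on variables outside them, after which it simply cites \eqref{top-direc-ent1}. That sketch is thinner than yours in two respects: it never accounts for the factor $k$ in \eqref{top-direc-ent2} (formula \eqref{top-direc-ent1} lives over $\mathbb{Z}_p$ and produces $\log p$, not $k\log p$), and the claim that the iterate has no dependence outside the cone of unit coefficients is only true modulo the very mod-$p$ filtration you introduce, since the mixed terms $A^{n-m}(pB)^{m}$ of $F^{n}=(A+pB)^{n}$ do reach outside that cone with non-unit coefficients. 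Your layer decomposition $\mathbb{Z}_{p^k}^{\mathbb{Z}}\cong(\mathbb{Z}_p^{\mathbb{Z}})^{k}$ together with the multiplicative identity $N_{p^k}=N_p^{\,k}$ is the D'Amico--Manzini--Margara mechanism \cite{Damico2003}, and it is what actually produces the constant $k$; moreover the decoupling you flag as the main obstacle does hold and is a one-line consequence of linearity, since layer $i$ of $T_{f}(x)$ equals $T_{\bar f}(y^{(i)})$ plus a term depending only on $x\bmod p^{i}$, i.e.\ a translation of $\mathbb{Z}_p^{\mathbb{Z}}$, which cannot change maximal separated cardinalities. So your argument, once that line is written out, is more complete than the paper's on the points that matter.

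One soft spot should be fixed. Your assertion that the reduction $\bar f$ is bipermutative with leftmost index $L$ and rightmost index $R$ fails exactly when $0$ is adjoined to $P$ artificially in \eqref{Left-Right-perm1}, i.e.\ when every $j$ with $\gcd(a_j,p)=1$ lies strictly on one side of $0$: then $\bar f$ is only one-sided permutative and its support does not reach $L$ (or $R$). The $\{0\}$ in the definition of $P$ is there precisely so that, for instance at $\theta=\pi/2$, the formula returns $(\max(R',0)-\min(L',0))\log p$ rather than $(R'-L')\log p$; your proof needs a sentence handling this case (e.g.\ by absorbing a power of $\sigma$ and using the directional reparametrization \eqref{eq11}) before \eqref{top-direc-ent1} can be invoked with endpoints $L$ and $R$. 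The paper's own proof glosses over the same point, so this is a gap to patch, not a reason to abandon the approach.
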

\begin{proof}
It is well known that if the local rule $f$ has radius $r$, i.e.,
it depends on at most $2r+1$ variables. We associate to the local
rule $f$ the finite \emph{fps} $F(X)=\sum_{i=-r}^{ r}a_iX^{i}$.
Also, finite fps associated with $f^{(n)}$ is $F^{n}(X)$. It is
obvious that there exist $L,R\in\mathbb{Z}$ and $n\in\mathbb{N}$
such that $f^{(n)}$ is permutative in the variables $x_L$ and
$x_R$. Thus, the local rule $f^{(n)}$ does not depend on variables
$x_j$ with $j<L$ or $j>R$, in other words, we can ignore the
variables $x_j$ with $j<L$ or $j>R$. Then, the local rule
$f^{(n)}$ is both leftmost and rightmost permutative. From
\eqref{top-direc-ent1}, we have the desired formula
\eqref{top-direc-ent2}. Thus, the proof is completed.
\end{proof}

The function \( h_{\theta}(\mathbb{Z}^{\mathbb{Z}^2}_{p^{k}},
\Phi) \) is defined as a piecewise function that depends on the
angle \( \theta \). Now, let us consider the local rule
\begin{equation}\label{local-rule-p-k3}
f(x_{l},\ldots,x_r)=\sum\limits_{i=l}^r a_ix_i\pmod m,
\end{equation}
where $m=\prod_{i=1}^{h}p_{i}^{k_{i}}$, the $p_{i}$ is prime
number.
\begin{corollary}
Let us consider the local rule given in \eqref{local-rule-p-k}. If
$p$ divides all coefficients $a_i$ for all $i=l,\cdots,r$, then
$h_{\theta} (\mathbb{Z}^{\mathbb{Z}^2}_{p^{k}},\Phi)=0.$
\end{corollary}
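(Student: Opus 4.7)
The plan is to exploit the nilpotency of $T_{f[l,r]}$ forced by the hypothesis and translate it into a uniform bound on $N(\epsilon, W(m,T,\theta))$. First, I would associate with the local rule its Laurent polynomial $F(X)=\sum_{i=l}^{r}a_{i}X^{i}\in\mathbb{Z}_{p^{k}}[X,X^{-1}]$. Since $p\mid a_{i}$ for every $i$, we may write $F(X)=p\,G(X)$ with $G\in\mathbb{Z}_{p^{k}}[X,X^{-1}]$. Because composition of linear CAs corresponds to multiplication of their Laurent polynomials modulo $p^{k}$, the $n$th iterate $T_{f[l,r]}^{n}$ is represented by $F(X)^{n}=p^{n}\,G(X)^{n}$. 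For $n\geq k$ one has $p^{n}\equiv 0\pmod{p^{k}}$, so $T_{f[l,r]}^{n}\equiv 0$: every configuration collapses to the zero sequence after at most $k$ iterations.

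Next, I would translate this nilpotency into a window count. Fix $\theta\in(0,\pi)$, so that $\sin\theta>0$. For any integer time $t\geq k$ and any lattice point $(n,t)\in W(m,T,\theta)\cap\mathbb{Z}^{2}$, nilpotency gives $(T_{f[l,r]}^{t}x)_{n}=0$ identically in $x$; hence every separation event in the window must be witnessed at one of the finitely many time levels $t\in\{1,\ldots,k-1\}$. At each such time, the horizontal cross-section of the parallelogram contains at most $2m+1$ lattice sites, and the value $(T_{f[l,r]}^{t}x)_{n}$ lies in $p^{t}\mathbb{Z}_{p^{k}}$, a set of cardinality $p^{k-t}$. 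Counting all possible restriction patterns of orbits across these $k-1$ slices yields
\[
N(\epsilon, W(m,T,\theta))\;\leq\;\prod_{t=1}^{k-1}\bigl(p^{k-t}\bigr)^{2m+1}\;=\;p^{(2m+1)k(k-1)/2},
\]
a bound independent of both $T$ and $\epsilon$.

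Inserting this estimate into the directional entropy density and letting $T\to\infty$ first gives
\[
\overline{\lim_{T\to\infty}}\,\frac{1}{T\sin\theta}\ln N(\epsilon,W(m,T,\theta))\;\leq\;\overline{\lim_{T\to\infty}}\,\frac{(2m+1)\,k(k-1)\log p}{2\,T\sin\theta}\;=\;0,
\]
after which the outer normalization by $1/(2m+1)$ and the subsequent limits in $m$ and $\epsilon$ preserve this zero. Hence $h_{\theta}(\mathbb{Z}^{\mathbb{Z}^{2}}_{p^{k}},\Phi)=0$ for every $\theta\in(0,\pi)$, which is the content of the corollary.

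The only delicate point I expect is making precise the dictionary between composition of linear CAs on $\mathbb{Z}_{p^{k}}^{\mathbb{Z}}$ and multiplication of their Laurent polynomials modulo $p^{k}$; once that identification is in place, the nilpotency reduction is immediate and the counting argument is elementary, since the tilt $\cot\theta$ of the window merely shifts the center of each horizontal slice without altering its $(2m+1)$-width.
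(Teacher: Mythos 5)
The paper states this corollary without any proof (the same assertion appears as the unproved ``trivial case'' sentence at the start of the proofs of Theorems \ref{DirEnt-Ber-1} and \ref{thm:dir-entropy-bound}), so there is no argument of the author's to compare against; judged on its own, your proof is correct and supplies exactly the missing reasoning. The key step --- that $p\mid a_i$ for all $i$ forces $F(X)^n=p^nG(X)^n\equiv 0 \pmod{p^k}$ for $n\geq k$, hence $T_{f[l,r]}^n$ is the zero map --- uses the same dictionary between composition of LCAs and multiplication of finite power series that the paper itself invokes in the proof of Lemma \ref{Akin2009-DEnt-lemma2}, so it is fully in the spirit of the text. Your counting step is also sound: a set that is $(\epsilon,W(m,T,\theta))$-separated injects into the set of value patterns on the lattice points of the window, only the time levels $t=1,\dots,k-1$ can carry nonconstant values, each level meets at most $2m+1$ sites, and $(T^t x)_n\in p^t\mathbb{Z}_{p^k}$ has $p^{k-t}$ elements, giving a bound on $N(\epsilon,W(m,T,\theta))$ independent of $T$ and hence zero density. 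Two small points worth making explicit if you write this up: (i) the endpoints $\theta=0,\pi$ are degenerate for the windowed definition because of the normalization by $T\sin\theta$, so your restriction to $\theta\in(0,\pi)$ is the honest scope of the argument (the piecewise formulas in the paper treat the closed interval only formally); and (ii) you should state that separation can only be witnessed at integer lattice points $(n,t)\in W\cap\mathbb{Z}^2$ with $t\geq 1$, since $y>0$ in the definition of the window --- this is what guarantees that the time-$0$ slice, where configurations are unconstrained, does not enter the count.
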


\begin{example}\label{ex:TDE-m=9}
Consider the 1D LCA over the ring $\mathbb{Z}_9$ with the following local rule:
\begin{equation}\label{biper-rule-l=-5-r=6}
f(x_{-5},x_{-4},\ldots,x_5,x_6) = 3x_{-5} + 2x_{-4} + 5x_5 + 7x_6 \pmod{9}.
\end{equation}

We identify the positions $j$ for which $\gcd(a_j, 3) = 1$, since $p^k = 3^2 = 9$:
\[
P = \{ j : \gcd(a_j, 3) = 1 \} = \{-4, 5, 6\}.
\]
Therefore, we have:
\[
L = \min P = -4, \qquad R = \max P = 6.
\]

We now define the angular parameters based on the inverse cotangent:
\[
\theta_L = \cot^{-1}(-L) = \cot^{-1}(4), \qquad \theta_R = \cot^{-1}(-R) = \cot^{-1}(-6).
\]

According to the known result for directional entropy (see Lemma~\ref{Akin2009-DEnt-lemma2}), the TDE for the function $\Phi$ is given by:
\begin{equation}\label{entropy-function-9}
h_\theta(\mathbb{Z}^{\mathbb{Z}^2}_9, \Phi) =
\begin{cases}
2 \left| \cos(\theta) + 6 \sin(\theta) \right| \log 3, & \text{for } \theta \in [0, \cot^{-1}(4)]; \\[6pt]
20 \left| \sin(\theta) \right| \log 3, & \text{for } \theta \in [\cot^{-1}(4), \cot^{-1}(-6)]; \\[6pt]
2 \left| \cos(\theta) - 4 \sin(\theta) \right| \log 3, & \text{for } \theta \in [\cot^{-1}(-6), \pi]
\end{cases}
\end{equation}

This piecewise function reflects the structure of entropy in different directional ranges for the given rule over $\mathbb{Z}_9$.
\begin{figure} [!htbp]
\centering
\includegraphics[width=100mm]{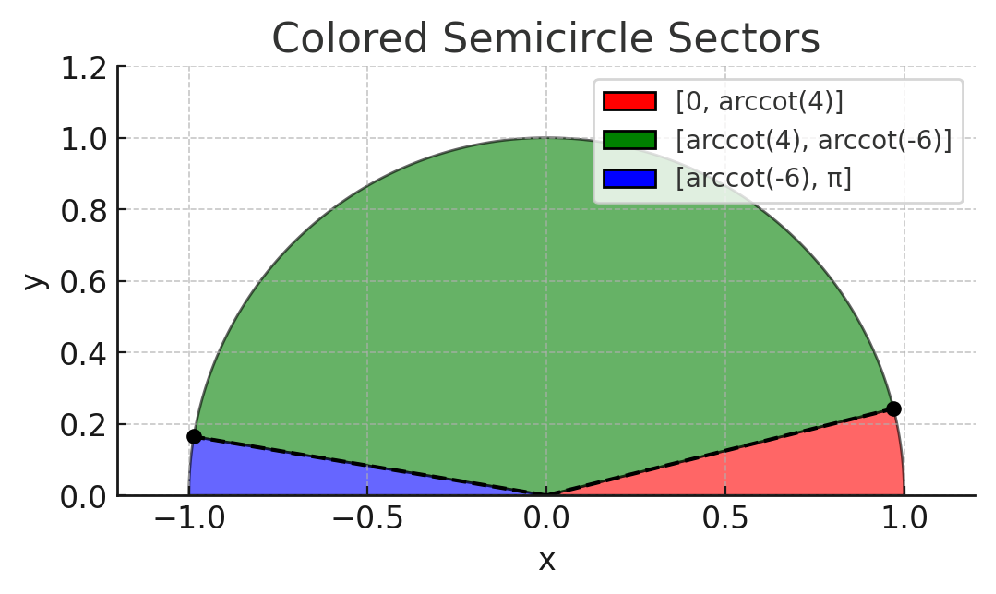}\ \
\caption{The figure shows the sectors for the angles expressing
the right and left permutative values corresponding to prime
number $p=3$  in local rule \eqref{biper-rule-l=-5-r=6}.
}\label{TDE-Ex-9-sector}
\end{figure}

\begin{figure} [!htbp]
\centering
\includegraphics[width=80mm]{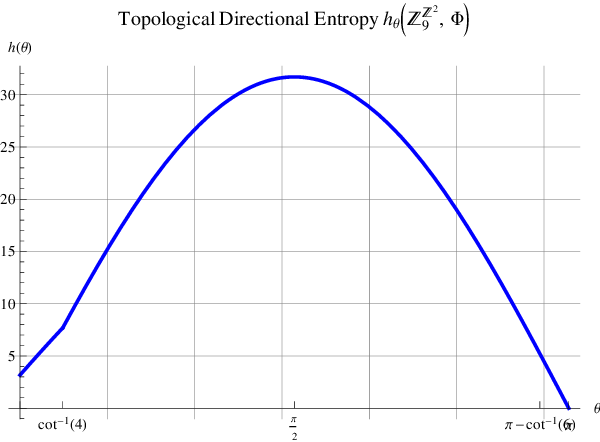}\ \
\caption{The graph of the TDE of $\mathbb{Z}^{2}$-action generated
by 1D LCA associated with the local rule given by
\eqref{biper-rule-l=-5-r=6} as a function of $\theta
$.}\label{Top-dir-entropy-9}
\end{figure}
\end{example}

\subsection{DTE for ring $m=p_1^{k_1}.p_2^{k_2} \ldots p_h^{k_h}$} 

The proof is provided similarly to Theorem \ref{Thm-MTDE1q} (1).

\begin{theorem}{\cite[Theorem 4.4]{Akin2009-DEnt}}\label{tde}\ Suppose that for $i = 1,\ldots,h$ the rule defined by \eqref{local-rule-p-k3} is both left and right permutative. Let $T_{f[l, r]}$ be a 1-D LCA over $\mathbb{Z}^{\mathbb{Z}}_{m}$ with local rule defined by (\ref{local-rule-p-k3}) and let $m=p_1^{k_1}.p_2^{k_2} \ldots p_h^{k_h}$ be the prime factor decomposition of $m$. For $i = 1,\ldots, h$ define
$$
P_{i} =\{0\}\cup \{j: gcd (\lambda_{j}, p_{i})=1\}, L_{i} = \min
P_{i}, R_{i} = \max P_{i}.
$$
$$
\textbf{P}_i=\{\theta_{L_i}=\text{arccot}(-L_i),\theta_{R_i}=\text{arccot}(-R_i)\}.
$$
Then we have
\begin{align*}\label{tde4}
&h_{\theta}(\mathbb{Z}^{\mathbb{Z}^2}_{m},\Phi)= \sum\limits_{i=1}^h h_{\theta }(\mathbb{Z}_{p_i^{k_i}}^{\mathbb{Z}^2},\Phi_{p_i}) \\
&= \left\{
\begin{array}{ll}
\sum\limits_{i=1}^h k_i |\cos \theta + R_i \sin \theta| \log p_i, & \text{for } \theta \in [0, \min \theta_{L_{j}}], \\
\sum\limits_{i=1, i \neq j}^h k_i |\cos \theta + R_i \sin \theta| \log p_i + \\
k_j |(R_j - L_j) \sin \theta| \log p_j, & \text{for } \theta \in [\min \theta_{L_{j}}, \min \{\theta_{R_{j_1}}, \theta_{L_{j_1}}\}], \\
\vdots \\
\sum\limits_{i=1}^h k_i |\cos \theta + L_i \sin \theta| \log p_i,
& \text{for } \theta \in [\max \theta_{R_{j}}, \pi].
\end{array}
\right.
\end{align*}
\end{theorem}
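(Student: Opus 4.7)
The plan is to use the Chinese Remainder Theorem (CRT) to split the action over the composite ring into a product of actions over prime-power rings, apply Lemma~\ref{Akin2009-DEnt-lemma2} to each factor, and then exploit additivity of topological (directional) entropy under products. Concretely, the CRT gives a ring isomorphism $\mathbb{Z}_m \cong \bigoplus_{i=1}^h \mathbb{Z}_{p_i^{k_i}}$, and this extends coordinatewise to a homeomorphism (indeed a conjugacy of the full shift) $\Psi : \mathbb{Z}_m^{\mathbb{Z}} \to \prod_{i=1}^h \mathbb{Z}_{p_i^{k_i}}^{\mathbb{Z}}$. Under $\Psi$, the local rule $f(x_l,\ldots,x_r)=\sum_{j=l}^r \lambda_j x_j \pmod m$ factors as $f \equiv (f_1,\ldots,f_h)$ with $f_i(x_l,\ldots,x_r) = \sum_{j=l}^r (\lambda_j \bmod p_i^{k_i}) x_j \pmod{p_i^{k_i}}$, and the shift $\sigma$ obviously factors as a product of shifts. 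Hence the $\mathbb{Z}^2$-action $\Phi$ is conjugate to the product action $\Phi_{p_1} \times \cdots \times \Phi_{p_h}$.

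Next, I would invoke the product formula for topological directional entropy: for commuting actions on compact metric spaces, $h_\theta(\Phi_{p_1}\times\cdots\times\Phi_{p_h}) = \sum_{i=1}^h h_\theta(\Phi_{p_i})$. This is standard; it follows from the fact that a maximal $(\epsilon,W(m,T,\theta))$-separated set in the product can be obtained as the product of separated sets in each factor (with $\epsilon$ rescaled appropriately), together with the Bowen-type product inequalities passed to the $\limsup$ in $m$ and $T$. Applied to each $\Phi_{p_i}$, Lemma~\ref{Akin2009-DEnt-lemma2} yields the three-piece formula in terms of $L_i = \min P_i$, $R_i = \max P_i$, and the thresholds $\theta_{L_i} = \operatorname{arccot}(-L_i)$, $\theta_{R_i} = \operatorname{arccot}(-R_i)$. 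Summing the three-piece expressions across $i$ gives the desired global formula.

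The last organizational step is to describe the piecewise structure cleanly. Ordering the $2h$ critical angles $\{\theta_{L_i}, \theta_{R_i}\}_{i=1}^h$ along $[0,\pi]$ partitions the angular domain into at most $2h+1$ subintervals; on each subinterval every index $i$ sits in exactly one of its three regimes (``left wedge'', ``middle wedge'', or ``right wedge''), so the total entropy is a sum of linear combinations of $|\cos\theta|$ and $|\sin\theta|$ with coefficients that are piecewise constant in $i$. The boundary cases in the statement correspond to the extremal intervals $[0,\min_j \theta_{L_j}]$ (every $i$ in its first regime) and $[\max_j \theta_{R_j},\pi]$ (every $i$ in its third regime), with the intermediate intervals described by successively moving one index from the first to the second to the third regime as $\theta$ crosses the next critical angle.

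The main obstacle, in my view, is not the entropy computation itself—Lemma~\ref{Akin2009-DEnt-lemma2} does almost all the analytic work—but rather justifying the CRT conjugacy step at the level of the directional entropy invariant and then bookkeeping the piecewise formula. The conjugacy part is routine once one verifies that $\Psi$ intertwines $T_{f[l,r]}$ with $\prod_i T_{f_i[l,r]}$ (a coefficient-by-coefficient check modulo each $p_i^{k_i}$) and carries the product metric to a metric equivalent to the one on $\mathbb{Z}_m^{\mathbb{Z}}$, so the separated-set counts agree up to $\epsilon$-rescaling. The bookkeeping obstacle is cosmetic: the statement is written for a particular ordering of the thresholds, but a fully rigorous write-up should either fix a convention for how the $\theta_{L_j},\theta_{R_j}$ interleave or present the formula as the sum $\sum_{i=1}^h h_\theta(\mathbb{Z}_{p_i^{k_i}}^{\mathbb{Z}^2},\Phi_{p_i})$ of the three-piece components from Lemma~\ref{Akin2009-DEnt-lemma2} and only then expand into a global piecewise expression.
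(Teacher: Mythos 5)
Your proposal follows essentially the same route as the paper: decompose $\mathbb{Z}_m^{\mathbb{Z}}$ via the CRT into a product over the prime-power factors, observe that $\Phi$ is conjugate to $\Phi_{p_1}\times\cdots\times\Phi_{p_h}$, apply the additivity of directional entropy under products (the paper's ``weak Addition Theorem''), and invoke Lemma~\ref{Akin2009-DEnt-lemma2} on each factor. Your additional remarks on justifying the separated-set product inequality and on fixing an ordering convention for the critical angles are reasonable refinements of details the paper leaves implicit, but they do not change the argument.
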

\begin{proof}
For all $i=1,\cdots,h$ define the local rules
$f_{p_i}:\mathbb{Z}_{p_i^{k_i}}^{r-l+1}\rightarrow
\mathbb{Z}_{p_i^{k_i}}$ by
\begin{eqnarray*}
f_{p_i}(x_l,\cdots ,x_r)&=&\sum _{j=l}^r c^{(i)}_jx_j\pmod {p_i^{k_i}}\\
&\equiv& f(x_l,\cdots ,x_r)\pmod {p_i^{k_i}}.
\end{eqnarray*}
Then for all $i=1,\cdots,h$, $f_{p_i}$ generates 1D LCA $T_{f_{p_i}[l,r]}:\mathbb{Z}_{p_i^{k_i}}^{\mathbb{Z}}\rightarrow \mathbb{Z}_{p_i^{k_i}}^{\mathbb{Z}}.$\\
We observe that $\mathbb{Z}_m^{\mathbb{Z}^2}\cong
\mathbb{Z}_{p_1^{k_1}}^{\mathbb{Z}^2}\times
\mathbb{Z}_{p_2^{k_2}}^{\mathbb{Z}^2}\times \cdots \times
\mathbb{Z}_{p_h^{k_h}}^{\mathbb{Z}^2}$ induces an isomorphism
$$\Psi :\mathbb{Z}_m^{\mathbb{Z}^2}\rightarrow
\mathbb{Z}_{p_1^{k_1}}^{\mathbb{Z}^2}\times
\mathbb{Z}_{p_2^{k_2}}^{\mathbb{Z}^2}\times \cdots \times
\mathbb{Z}_{p_h^{k_h}}^{\mathbb{Z}^2}.
$$
For all $i=1,\cdots,h$ define $\mathbb{Z}^2$-actions $\Phi
_{p_i}:\mathbb{Z}_{p_i^{k_i}}^{\mathbb{Z}^2}\rightarrow
\mathbb{Z}_{p_i^{k_i}}^{\mathbb{Z}^2}$ by $\Phi
_{p_i}^{(u,s)}=T_{f_{p_i}[l,r]}^s\circ \sigma ^u$ for
$(u,s)\in\mathbb{Z}^2$. So, we have the following commutative
diagram
\begin{equation}\label{diagram-TDE1}
\vcenter{\xymatrix@C=3.5em@R=3em{
\mathbb{Z}_m^{\mathbb{Z}^2} \ar[r]^{\Phi} \ar[d]_{\Psi} & \mathbb{Z}_m^{\mathbb{Z}^2} \ar[d]^{\Psi} \\
\mathbb{Z}_{p_1^{k_1}}^{\mathbb{Z}^2}\times
\mathbb{Z}_{p_2^{k_2}}^{\mathbb{Z}^2}\times \cdots \times
\mathbb{Z}_{p_h^{k_h}}^{\mathbb{Z}^2}
\ar[r]_-{\bigtimes_{i=1}^h \Phi_{p_i}} & \mathbb{Z}_{p_1^{k_1}}^{\mathbb{Z}^2}\times
\mathbb{Z}_{p_2^{k_2}}^{\mathbb{Z}^2}\times \cdots \times
\mathbb{Z}_{p_h^{k_h}}^{\mathbb{Z}^2}}}
\end{equation}
From the diagram \eqref{diagram-TDE1}, we can prove that $\Psi
=\Psi _{p_1}\times \Psi _{p_2}\times \cdots \times \Psi _{p_h}$ is
an isomorphism. Therefore, $\Phi =\Phi _{p_1}\times \Phi
_{p_2}\times \cdots \times \Phi _{p_h}$. From the weak Addition
Theorem, we get
$$
h_{\theta}(\mathbb{Z}^{\mathbb{Z}^2}_{m},\Phi)=\sum _{i=1}^h
h_{\theta }(\mathbb{Z}_{p_i^{k_i}}^{\mathbb{Z}^2},\Phi _{p_i}).
$$
Thus the proof is completed.
\end{proof}

The theorem \ref{tde} gives an explicit formula for the TDE of the
$\mathbb{Z}^2$-actions determined by 1D LCAs and shift map over the ring $\mathbb{Z}_m$.

\begin{example}\label{ex:DE-30}
Let us consider the local rule
\begin{equation}\label{top-ent-ex2}
f(x_{-3},x_{-2},\cdots,x_2,x_3)=2x_{-3}+3x_{-2}+
5x_{-1}+30x_0+3x_1+2x_2+5x_3\pmod {30} \end{equation} From theorem
\ref{tde}, we have
\begin{eqnarray*}
P_{2} &=&
\{-2,-1,0,1,3\}, L_{2} = -2, R_{2} = 3, \textbf{P}_2=
\{\theta_{L_2}=\cot^{-1} (2),\theta_{R_2}=\cot^{-1} (-3)\}\\
P_{3} &=&
\{-3,-1,0,2,3\}, L_{3} = -3, R_{3} =3, \textbf{P}_3=
\{\theta_{L_3}=\cot^{-1} (3),\theta_{R_3}=\cot^{-1} (-3)\}\\
P_{5} &=&
\{-3,-2,0,1,2\}, L_{2} = -3, R_{5} = 2, \textbf{P}_5=
\{\theta_{L_5}=\cot^{-1} (3),\theta_{R_5}=\cot^{-1} (-2)\}\\
\end{eqnarray*}

Let us compute the MTDE of $Z^2$-action generated by the shift map
and 1D LCA associated with the local rule given by
\eqref{top-ent-ex2} as a piecewise function depending on the value
of \( \theta \). 

So, we get
\begin{align*}
h_{\theta}(\mathbb{Z}^{\mathbb{Z}^2}_{30}, \Phi)& =h_{\theta}(\mathbb{Z}^{\mathbb{Z}^2}_{2}, \Phi_1)+h_{\theta}(\mathbb{Z}^{\mathbb{Z}^2}_{3}, \Phi_2)+h_{\theta}(\mathbb{Z}^{\mathbb{Z}^2}_{5}, \Phi_3)\\
&=
\begin{cases} 
\left|\cos\theta + 3\sin\theta\right|\log 6 + \left|\cos\theta + 2\sin\theta\right|\log 5, & \theta \in [0, 0.3218], \\
\left|\cos\theta + 3\sin\theta\right|\log 2 + \sin\theta(6\log 3 + 5\log 5), & \theta \in [0.3218, 0.4636], \\
\sin\theta(5\log 2 + 6\log 3 + 5\log 5), & \theta \in [0.46, \pi - 0.46], \\
\sin\theta(5\log 2 + 6\log 3)+ \left|\cos\theta - 3\sin\theta\right|\log 5, & \theta \in [\pi - 0.46, \pi - 0.32], \\
\left|\cos\theta - 2\sin\theta\right|\log 2 + \left|\cos\theta - 3\sin\theta\right|\log 15, & \theta \in [\pi - 0.32, \pi].
\end{cases}
\end{align*}
\begin{figure} [!htbp]
\centering
\includegraphics[width=100mm]{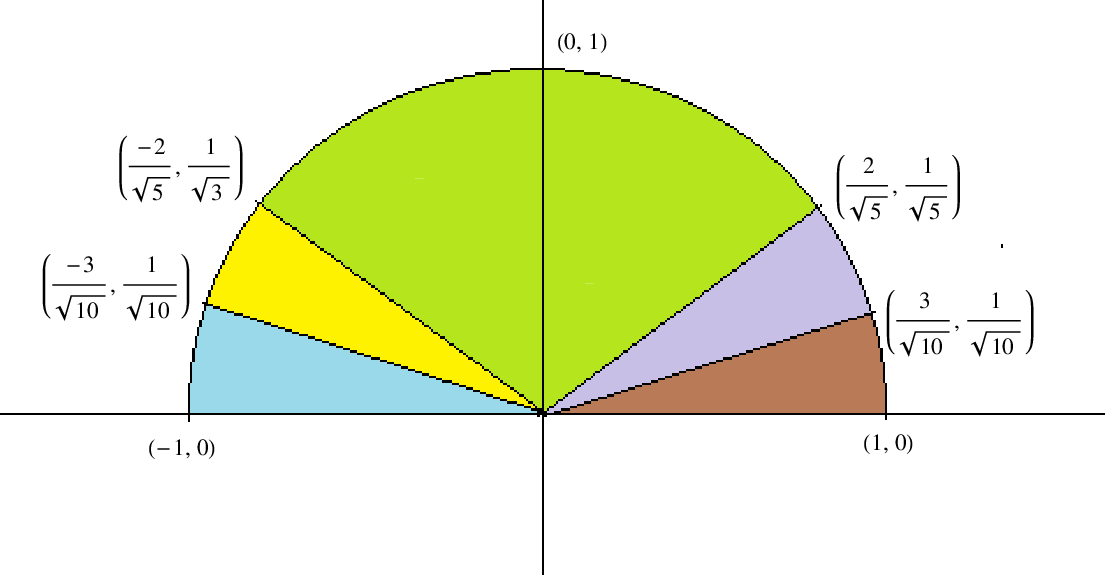}\ \
\caption{The figure shows the sectors for the angles expressing
the right and left permutative values corresponding to prime
numbers $p=2,3,5$  in local rule \eqref{top-ent-ex2}.
}\label{Top-dir-entropy30-sector}
\end{figure}

\begin{figure} [!htbp]
\centering
\includegraphics[width=80mm]{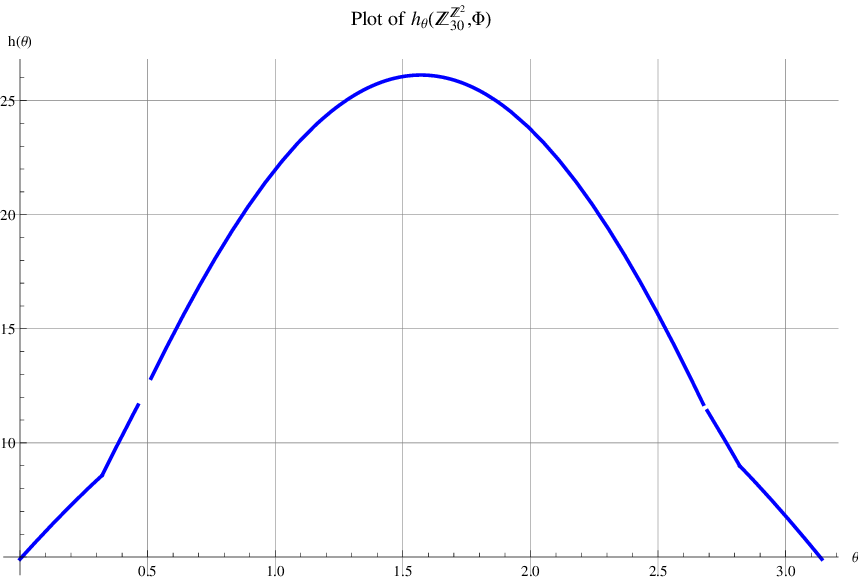}\ \
\caption{The graph of the TDE of $\mathbb{Z}^{2}$-action generated
by 1D LCA associated with the local rule given by
\eqref{top-ent-ex2} as a function of $\theta
$.}\label{Top-dir-entropy30}
\end{figure}

In figure \ref{Top-dir-entropy30-sector}, we have plotted the
sectors for the angles expressing the right and left permutative
values corresponding to prime numbers $p=2,3,5$  in local rule
\eqref{top-ent-ex2}. As shown in Figure \ref{Top-dir-entropy30},
this quantity of entropy
$h_{\theta}(\mathbb{Z}^{\mathbb{Z}^2}_{30},\Phi)$ takes the
largest value for $\theta =\pi/2$. The function
$h_{\theta}(\mathbb{Z}^{\mathbb{Z}^2}_{30},\Phi)$ reveals as a
piecewise function defined over 5 disjoint intervals. Another
interesting case is that this entropy function
$h_{\theta}(\mathbb{Z}^{\mathbb{Z}^2}_{30},\Phi)$ is discontinuous
at $\theta =\pi -0.46$ and $\theta =\pi -0.32$ values.
\end{example}

\paragraph{Key Observations}  
A notable pattern emerges: across both case studies, the TDE achieves its supremum at the orthogonal orientation $\theta = \pi/2$, where it aligns precisely with the system's topological entropy.  

\paragraph{Computational Scope}  
In this section, we have evaluated the TDE for $\mathbb{Z}^2$-actions generated by 1D-CAs driven by three distinct local update rules, coupled with the shift transformation $\sigma$. The configurations span:  
\begin{itemize}  
    \item The prime field $\mathbb{F}_5$ (Example~\ref{ex:TDE-p=5}),  
    \item The modular ring $\mathbb{Z}_9$ (Example~\ref{ex:TDE-m=9}),  
    \item The composite ring $\mathbb{Z}_{30}$ (Example~\ref{ex:DE-30}).  
\end{itemize}

\subsection{Comments and references}

The results presented in Section
\ref{topological-directional-entropy} are based on previous studies
\cite{Akin2009-DEnt,Bernardo-Co-2001,Courbage2002,Milnor1988}. It
is generally impossible to compute the TDE of
\(\mathbb{Z}^{2}\)-actions through algorithmic methods
\cite{Courbage2005}. This section explores the concept of TDE as
discussed in \cite{Akin2009-DEnt,Akin2008,Bernardo-Co-2001}.
Future research will focus on addressing unresolved questions
regarding the computation of TDE for \(\mathbb{Z}^{2}\)-actions
defined over various rings.

In \cite{Akin-95-Erg-AMC}, we investigated the ergodic properties
of \(\mathbb{Z}^2\)-actions generated by shifts and CAs governed
by \( r \)-radius local maps on the space \(
\mathbb{Z}_m^{\mathbb{Z}} \). This study lays the groundwork for
further exploration of DEs associated with this family of transformations. In Ref. \cite{AS2007}, the authors examined the topological entropy of cellular automata defined over the class of Galois rings, in contrast to the standard ring $\mathbb{Z}_m$. Using similar reasoning, one can formulate explicit expressions for the computation of TDE.

Additionally, in \cite{ADGT-2023}, we determined the algebraic
entropy of finitary 1D LCAs \( S \). Our findings demonstrated
that the algebraic entropy of \( S \) aligns with the topological
entropy of its Pontryagin dual, \( T = \hat{S} \), in accordance
with the Bridge Theorem. However, the computation of directional
algebraic entropy for the \(\mathbb{Z}^2\)-action induced by
finitary LCAs remains an open problem, which we plan to
investigate in future studies.  When the direction vector \( v =
(0,1) \) is chosen, the notion of directional entropy reduces to
the standard (classical) entropy of the transformation.
Consequently, the findings presented in this section can be viewed
as generalizations of the results established in \cite{Akin-2024}.

 \bibliographystyle{plain}

\begin{thebibliography}{99}
\bibitem{Adler-1965} Adler, R.,  Konheim, A., McAndrew, M. Topological entropy, Trans. Am. Math. Soc., \textbf{114}, 309--319 (1965).

\bibitem{ADGT-2023} 
Ak\i n, H., Dikranjan, D., Giordano Bruno, A., Toller, D. The algebraic entropy of one-dimensional finitary linear cellular automata. \textit{J. Group Theory} \textbf{27}(4), 813--856 (2024).  
https://doi.org/10.1515/jgth-2023-0092

\bibitem{Akin-95-Erg-AMC} 
Ak\i n, H. On the ergodic properties of certain linear cellular automata over $\mathbb{Z}_m$. \textit{Appl. Math. Comput.} \textbf{168}, 192--197 (2005).  
https://doi.org/10.1016/j.amc.2004.08.049

\bibitem{Akin-AMC-2005a}  
Ak\i n, H. On the directional entropy of $\mathbb{Z}^{2}$-actions generated by additive cellular automata. \textit{Appl. Math. Comput.} \textbf{170}(1), 339--346 (2005).  
https://doi.org/10.1016/j.amc.2004.11.032

\bibitem{Akin2008} 
Ak\i n, H. The topological entropy of invertible cellular automata. \textit{J. Comput. Appl. Math.} \textbf{213}(2), 501--508 (2008).

\bibitem{Akin2009-DEnt} 
Ak\i n, H. On the topological directional entropy. \textit{J. Comput. Appl. Math.} \textbf{225}, 459--466 (2009).

\bibitem{AkinIJBC-2012DE} 
Ak\i n, H. An upper bound of the directional entropy with respect to the Markov measures. \textit{Int. J. Bifurcation Chaos} \textbf{22}(11), 1250263 (2012).  
https://doi.org/10.1142/S021812741250263X

\bibitem{AkinIJMPC-2011DE}  
Ak\i n, H. Upper bound of the directional entropy of a $\mathbb{Z}^{2}$-action. \textit{Int. J. Mod. Phys. C} \textbf{22}(07), 711--718 (2011).  
https://doi.org/10.1142/S0129183111016555

\bibitem{Akin-ergodic-2005} Akin H., On the ergodic properties of certain linear cellular automata over $\mathbb{Z}_m$, Appl. Math. Computation \textbf{168}, 192--197 (2005).

\bibitem{Akin-2024} 
Ak\i n, H. Entropy of linear CA over the ring $\mathbb{Z}_{m}$. In: Adamatzky, A., Sirakoulis, G.C., Martinez, G.J. (eds) \textit{Advances in Cellular Automata}. Emergence, Complexity and Computation, vol. 52. Springer, Cham (2025).  
https://doi.org/10.1007/978-3-031-78757-7\_15

\bibitem{AS2007} Ak\i n H.,  \& Siap I.,
On cellular automata over Galois rings, Information Processing
Letters, \textbf{103} (1), 24-27 (2007).
https://doi.org/10.1016/j.ipl.2007.02.002

\bibitem{Bernardo-Co-2001}  Bernardo M., Courbage M., Computation of directional entropy for permutative cellular automata, in These de Mathieu Bernardo, Universit'e Paris 7-Denis Diderot, June 2001.
\bibitem{BCR-IJM-2016} 
Broderick, R., Cyr, V., Kra, B. Complexity and directional entropy in two dimensions. \textit{Isr. J. Math.} \textbf{215}, 135--162 (2016).  
https://doi.org/10.1007/s11856-016-1376-8

\bibitem{CC-2013} 
Chang, C.-H., Chen, Y.-W. The measure-theoretic entropy and topological entropy of actions over $\mathbb{Z}_m$. \textit{J. Math.} 2013, 404626 (2013).  
https://doi.org/10.1155/2013/404626

\bibitem{Courbage2002}  
Courbage, M., Kaminski, B. On the directional entropy of $\mathbb{Z}^2$-actions generated by cellular automata. \textit{Studia Math.} \textbf{153}(3), 285--295 (2002).

\bibitem{Courbage2005} 
Courbage, M. Complexity of extended dynamical systems. \textit{J. Phys. Conf. Ser.} \textbf{7}, 86--93 (2005).

\bibitem{Courbage21}  
Courbage, M. The directional entropy for spatially extended dynamical systems. In: Volchenkov, D. (ed) \textit{The Many Facets of Complexity Science}. Nonlinear Physical Science. Springer, Singapore (2021).

\bibitem{Damico2003} 
D'amico, M., Manzini, G., Margara, L. On computing the entropy of cellular automata. \textit{Theor. Comput. Sci.} \textbf{290}, 1629--1646 (2003).


\bibitem{Denker1976} Denker, M., Grillenberger, C., \& Sigmund, K.
Ergodic theory on compact space, Springer Lecture Notes in Math.
\textbf{527}, 1976.

\bibitem{Favati1997} 
Favati, P., Lotti, G., Margara, L. Additive one-dimensional cellular automata are chaotic according to Devaney's definition of chaos. \textit{Theor. Comput. Sci.} \textbf{174}, 157--170 (1997).

\bibitem{Hedlund1969} 
Hedlund, G.A. Endomorphisms and automorphisms of full shift dynamical systems. \textit{Math. Syst. Theor.} \textbf{3}, 320--375 (1969).

\bibitem{Kaminski1999} 
Kaminski, B., Park, K.K. On the directional entropy for $\mathbb{Z}^{2}$-actions on a Lebesgue space. \textit{Studia Math.} \textbf{133}(1), 39--51 (1999).

\bibitem{KKW-2024} 
Kanigowski, A., Katok, A., Wei, D. Survey on entropy-type invariants of sub-exponential growth in dynamical systems. In: \textit{A Vision for Dynamics in the 21st Century: The Legacy of Anatole Katok}. Cambridge University Press, Cambridge, pp. 232--289 (2024).

\bibitem{Kolmogorov-1959} 
Kolmogorov, A.N. Entropy per unit time as a metric invariant of automorphism. \textit{Dokl. Akad. Nauk SSSR} \textbf{124}, 754--755 (1959).

\bibitem{liu2022directional} 
Liu, C., Zhou, X. Directional entropy dimension of topological dynamical systems. \textit{J. Differ. Equ.} \textbf{333}, 332--360 (2022).

\bibitem{Milnor1986} 
Milnor, J. Directional entropies of cellular automaton-maps. In: \textit{Disordered Systems and Biological Organization}, NATO Adv. Sci. Inst. Ser. F 20, Springer, pp. 113--115 (1986).

\bibitem{Milnor1988} 
Milnor, J. On the entropy geometry of cellular automata. \textit{Complex Syst.} \textbf{2}, 357--386 (1988).

\bibitem{Park1994} 
Park, K.K. Continuity of directional entropy functions. \textit{Osaka J. Math.} \textbf{31}, 613--628 (1994).

\bibitem{Park-1995} 
Park, K.K. Continuity of directional entropy for a class of $\mathbb{Z}^{2}$-actions. \textit{J. Korean Math. Soc.} \textbf{32}(3), 573--582 (1995).

\bibitem{Park-1999} 
Park, K.K. On directional entropy functions. \textit{Isr. J. Math.} \textbf{113}(1), 243--267 (1999).  
https://doi.org/10.1007/BF02780179

\bibitem{Sablik2007} 
Sablik, M. Measure rigidity for algebraic bipermutative cellular automata. \textit{Ergod. Th. Dynam. Sys.} \textbf{27}, 1965--1990 (2007).

\bibitem{Sinai-1959} 
Sinai, Ya.G. On the notion of entropy of a dynamical system. \textit{Dokl. Akad. Nauk SSSR} \textbf{124}, 768--771 (1959).

\bibitem{Sinai1985} 
Sinai, Ya.G. An answer to a question by J. Milnor. \textit{Comment. Math. Helv.} \textbf{60}, 173--178 (1985).

\bibitem{Walters1982} Walters, P. An Introduction to Ergodic Theory,
Springer Graduate Texts in Math. 9, New York, 1982.

\bibitem{WXZ-CMP-2024} 
Wei, R., Xu, L., Zhou, X. Directional entropy and Pinsker $\sigma$-algebra for $Z^2$-actions. \textit{Commun. Math. Stat.} (2024).  
https://doi.org/10.1007/s40304-023-00369-z
\end{thebibliography}

\end{document}